\documentclass[11pt]{amsart}
\usepackage{cite}
\usepackage{graphicx}
\usepackage{latexsym,amsmath,amsfonts,amscd, amsthm}
\usepackage{color}
\usepackage[colorlinks=true]{hyperref}
\hypersetup{urlcolor=blue, citecolor=red}
\usepackage{tikz}
\usetikzlibrary{arrows,backgrounds,snakes}
\usepackage{epsfig}
\usepackage{epstopdf}
\usepackage{dsfont}
\usepackage{amssymb}
\usepackage{xcolor}
\usepackage{comment}
\usepackage{subfigure}
\usepackage{multirow}
\usepackage{mathrsfs}
\usepackage{diagbox}
\usepackage{enumerate}
\usepackage{ragged2e}

\newtheoremstyle{plainNoItalics}{}{}{\normalfont}{}{\bfseries}{.}{ }{}
\theoremstyle{plain}
\newtheorem{thm}{Theorem}[section]

\newtheorem{defn}[thm]{Definition}
\newtheorem{rem}[thm]{Remark}
\newtheorem{prop}[thm]{Proposition}
\newtheorem{exa}[thm]{Example}

\newcommand{\beq}{\begin{equation}}
\newcommand{\eeq}{\end{equation}}
\newcommand{\beqa}{\begin{eqnarray}}
\newcommand{\eeqa}{\end{eqnarray}}
\newcommand{\bit}{\begin{itemize}}
\newcommand{\eit}{\end{itemize}}
\newcommand{\bedef}{\begin{defn}}
\newcommand{\edefn}{\end{defn}}
\newcommand{\bpro}{\begin{prop}}
\newcommand{\epro}{\end{prop}}

\newcommand{\Dt}{\Delta t}

\newcommand{\eps}{\varepsilon}

\newcommand{\bx}{{\bf x}}

\newcommand{\bu}{{\bf u}}
\newcommand{\bm}{{\bf m}}

\newcommand\ds{ \displaystyle }

\newcommand\red[1]{\textcolor{red}{#1}}

\title[SWEs with  a friction]{Asymptotic preserving scheme for the shallow water equations with non-flat bottom topography and Manning friction term}

\keywords{Shallow water equations; friction;  finite difference WENO; high order; asymptotic preserving; well-balanced.}

\begin{document}

\maketitle
\centerline{\author{Guanlan Huang$^3$, 
Sebastiano Boscarino$^1$ 
and Tao Xiong$^2$}}

\footnotetext{Email address:
{{\tt glhuang@fjnu.edu.cn}  (Guanlan Huang),  {\tt boscarino@dmi.unict.it} (Sebastiano Boscarino),  {\tt txiong@xmu.edu.cn} (Tao Xiong)} \\[2mm]
  $^1$ Department of Mathematics and Computer Science, University of Catania, Catania 95125, Italy.\\[1mm]
  $^2$ School of Mathematical Sciences, Xiamen University, Fujian Provincial Key Laboratory of Mathematical Modeling and High-Performance Scientific Computing, Xiamen, Fujian, 361005, P.R. China.\\[1mm]
  $^3$ School of Mathematics and Statistics \& Key Laboratory of Analytical Mathematics and Applications (Ministry of Education) \& Fujian Key Laboratory of Analytical Mathematics and Applications (FJKLAMA) \& Center for Applied Mathematics of Fujian Province (FJNU), Fujian Normal University, 350117 Fuzhou, P.R. China.
}




\begin{abstract}
  In our previous work~\cite{huang2023high}, we proposed a class of high-order asymptotic preserving (AP)  finite difference weighted essentially non-oscillatory (WENO)  schemes for  solving   the shallow water equations (SWEs) with bottom topography and  Manning friction,   utilizing  a penalization technique  inspired  by \cite{boscarino2014high}.
  Although  the added weighted  diffusive term enhanced stability, it increased computational  cost and slowed  down the convergence rate in  the intermediate regime  between convection and diffusion.
  In this paper, we extend our  previous study  by  removing  the penalization while preserving  the AP property.
  To achieve this,  we employ a high order semi-implicit implicit-explicit Runge-Kutta (SI-IMEX-RK)   time  discretization,  coupled  with  the  high-order WENO reconstruction for first-order derivatives  and a central difference scheme  for second-order spatial derivatives.  This combination  yields  a class of  fully high-order schemes.
  Theoretical analysis and numerical experiments demonstrate that the proposed  schemes  retain AP, asymptotically accurate (AA) and well-balanced properties, while  offering  higher computational efficiency compared to  our previous  schemes in~\cite{huang2023high}, especially  in  the intermediate regime between convection and diffusion.
  Moreover, treating the momentum in the friction terms implicitly is essential for preserving the AP property; otherwise, the scheme fails to converge to the limiting equations. This indicates that implicit treatment of Manning friction is necessary for the stability of the method.
\end{abstract}

\vspace{0.1cm}

\section{Introduction}
\label{sec1}
\setcounter{equation}{0}
\setcounter{figure}{0}
\setcounter{table}{0}
In  computational fluid dynamics (CFD) simulations, explicit methods have been  preferred  for their simplicity, particularly in nonlinear hyperbolic systems, which shocks  can be  generated  even if the initial conditions are continue~\cite{leveque2002finite,toro2009riemann},  but they suffer from  stability issues when handling stiff systems, requiring small time steps. While implicit methods, though more stable, are computationally expensive \cite{butcher1964implicit,butcher2016}. 

To balance efficiency and stability, implicit-explicit (IMEX) schemes emerged, allowing  stiff terms to be treated implicitly and  non-stiff terms explicitly \cite{kennedy2019higher}. These schemes are particularly effective for multiscale problems in complex multi-physics systems\cite{jin2022asymptotic}. 
For instance, one major  advantage of IMEX schemes is their  ability to capture the correct behavior during scale transitions,  such as from convection-dominated to diffusion-dominated regimes, 
especially in  the diffusive limit\cite{boscarinoerror,peng2020stability,xiong2022high,zheng2024high}, which can be difficult to address in scheme design. Such behaviour in these schemes is known as  {\em asymptotic preserving} (AP). 
The concept of AP schemes  was  first proposed by Jin et al. in \cite{jiang1996efficient, jin1999efficient} for kinetic equations and has significant  applications across  various physical problems~\cite{degond2005smooth,lemou2008new, degond2011all,haack2012all}.  
For more details, readers can refer to references~\cite{hu2017ap, jin2022asymptotic}.

Among the various applications of IMEX schemes to  shallow water equations (SWEs) with non-flat bottom topography and Manning friction, we propose two approaches that are particularly relevant in simulating these equations numerically. The first approach involves additive IMEX methods, which use a penalization technique to ensure correct convergence of solutions to the limit equation. This approach has been effective in simulating systems like kinetic equations~\cite{peng2020stability,dimarco2013asymptotic,boscarino2013implicit}, radiation transport~\cite{xiong2022high,zheng2024high}, and other similar applications.
In \cite{huang2023high}, the authors applied additive IMEX time discretization with a penalization strategy, combining it with high-order WENO reconstructions proposed by Jiang and Shu~\cite{shu1998essentially,jiang1996efficient} and central difference schemes \cite{boscarino2019high} for spatial discretization.  The penalization technique  inspired by \cite{boscarino2014high} were  introduced by adding a weighted nonlinear diffusive  term to ensure the  AP property of the scheme.
However, the choice of weights affects the stability of the scheme \cite{boscarino2014high,peng2020stability}, and incorrect weights can  lead to  instability. Typically, these weights approach $0$ in convection-dominated regimes and $1$ in diffusion-dominated regimes, but the penalization term can slow the convergence rate in intermediate states between these regimes.

To address this issue, in this paper we adopt the second approach that involves semi-implicit (SI) IMEX framework. The SI-IMEX approach eliminates the need for penalization techniques while ensuring the schemes maintain the AP property.
 This approach has been introduced in \cite{boscarino2014high, boscarino2016high} and it is particularly effective for  hyperbolic systems with  stiff terms  like the shallow water equations~\cite{huang2022high,liu2020well,liu2019asymptotic,bispen2014imex} and the Euler equations~\cite{huang2024FEhigh,bispen2017asymptotic,boscarino2019high,boscarino2022high}. Specifically, SI-IMEX schemes have been successfully applied to systems, including the all-Froude shallow water equations and all-Mach Euler equations~\cite{boscarino2019high, boscarino2022high,huang2024FEhigh,NoelleLuka2012AP,noelle2014weakly,bispen2017asymptotic,huang2023high,bispen2014imex,liu2019asymptotic}. 

In summary, in this paper our main focus is to  design a class of AP SI-IMEX methods for the time discretization to avoid the penalization by adding a weighted diffusive term. We will show that, by removing the penalty term, we reduce computational cost and enhance the overall efficiency of the scheme. Furthermore, we show that the scheme is also {\em asymptotically accurate} (AA), i.e., the scheme maintains its order of temporal accuracy for the limit equation. Finally, for the spatial discretization,  we will continue to use  WENO reconstructions \cite{jiang1996efficient,shu1998essentially} for the first-order derivatives and  the central difference scheme \cite{boscarino2019high} for second-order derivatives.

\subsection{Model equations and diffusive limit}
The shallow water equations (SWEs) with non-flat bottom topography and Manning friction in  dimensionless form are  expressed as~\cite{bulteau2020fully,huang2023high}:
\begin{equation}
  \left\{
  \begin{array}{ll}
    \partial_t h + \nabla \cdot \bm   = 0, \\ [3mm]
    \partial_t\bm + \nabla \cdot \left(\dfrac{\bm\otimes \bm}{h}\right) + \dfrac{1}{\eps^2}\nabla\left(\dfrac{g}{2}h^2\right)= -\dfrac{1}{\eps^2}gh\nabla B - \dfrac{1}{\eps^2}\gamma \bm.
  \end{array}\right.
  \label{SWe_MB4}
\end{equation}
This system  can be derived from the Navier-Stokes equations,  which  are fundamental  in   describing  fluid motion~\cite{stoker1992water,whitham2011linear,de1871theorie,boussinesq1877essai,lamb1932hydrodynamics}, and they also have wide-ranging applications in fields such as  oceanography, engineering, and geophysics\cite{pedlosky1987geophysical,haidvogel2000model,shchepetkin2005regional,griffiths2007internal,hervouet2000high,liang2009adaptive,kanamori1972mechanism,titov1998numerical,xing2005high,kurganov2002central,xing2006high,xing2006new,huang2022high}. 
Here, $h$ represents the   water depth, $\bm$ is the momentum, and $\bu = \frac{\bm}{h}$ denotes the  velocity. 
These  variables are  defined on a time-space domain $(t,\bx)\in\mathbb{R}^+\times\Omega$. 
The gravitational constant is denoted by $g$, and $B(\bx)$ represents the bottom topography, which is independent of time. 
The term $\gamma \bm$ represents the Manning friction,  with  $\gamma = \frac{gk^2|\bm|}{h^{\eta}}$.
Here,  $|\bm|$ denotes the $L_2$ norm of $\bm$,  and $k$ is the Manning coefficient, indicating the intensity of friction exerted by the bottom on the water,  a higher value of $k$ implies greater friction. 
The parameter $\eta$ is set to $7/3$, and $\otimes$ denotes the Kronecker product.

As the {\bf{late-time}} behavior of the system is considered, in  scenarios where friction dominates, the variables $h$ and $\bm$ undergo  long-time simulations as  $\eps$ goes to zero, driving  the system into  a diffusive regime~\cite{bulteau2020fully, huang2023high}.
This  transition is  based on the  Chapman-Enskog expansion \cite{bulteau2020fully,huang2023high}, where the  conserved  variables are   expressed  as follows:
\begin{equation}
	\label{Exp1}
	\left\{
	\begin{array}{ll}
		h(\bx,t) = h_0(\bx,t) +\eps h_1(\bx,t) +\eps^2 h_2(\bx,t) +  \cdots,\\ [3mm]
		\bm(\bx,t) = \bm_0(\bx,t)+ \eps \bm_1(\bx,t) + \eps^2 \bm_2(\bx,t) + \cdots.
	\end{array}\right.
\end{equation}
Introducing  the variable  $H=h+B$ as the surface level,  its Chapman-Enskog  expansion takes the form:
\begin{equation}
  \label{Exp2}
  H(\bx,t) = h_0(\bx,t) + B(\bx) +\eps h_1(\bx,t) +\eps^2 h_2(\bx,t) +  \cdots,
\end{equation}
since   the bottom topology $B(x)$ is  independent of  time.
Substituting   the expansions \eqref{Exp1} and \eqref{Exp2} into the  system \eqref{SWe_MB4} and  taking the limit as  $\eps \rightarrow  0$,
we obtain the leading-order terms of system \eqref{SWe_MB4}:
\begin{subequations}
    \begin{equation}
        \partial_t h_0 + \nabla \cdot \bm_0   = 0,
        \label{lim_E1}
    \end{equation}
    \begin{equation}
        \nabla\left(\frac{g}{2}h_0^2\right) = - gh_0\nabla B - \gamma_0 \bm_0,
        \label{lim_E2}
    \end{equation}
\end{subequations}
with $\gamma_0 = \frac{gk^2|\bm_0|}{h_0^{\eta}}$.
Denoting $H_0(\bx) = h_0(\bx) + B(\bx)$, we can rewrite equation \eqref{lim_E2} as
\begin{equation}
\bm_0 = -\sqrt{\frac{h_0^{\eta+1}}{k^2}}\frac{\nabla H_0}{\sqrt{|\nabla H_0|}}.
\label{lim_E3}
\end{equation}
Substituting \eqref{lim_E3} into \eqref{lim_E1} yields the following limit system,
\begin{subequations}
  \label{lim_E4}
  \begin{align}
&\partial_t h_0 = \nabla\cdot\left( \sqrt{\frac{h_0^{\eta+1}}{k^2}}\frac{\nabla H_0}{\sqrt{|\nabla H_0|}}\right), \label{lim_E4-1}\\
&\nabla(\frac{g}{2}h_0^2) = - gh_0\nabla B - \gamma_0 \bm_0.\label{lim_E4-2}
  \end{align}
\end{subequations}
This  system   can  also be viewed  a class of  the non-stationary $p$-Laplacian,  with  the  general form:
\begin{equation}
\partial_t h_0 - \nabla\cdot\left(|\nabla h_0|^{p-2}\nabla h_0 \right) = 0, \quad p >1,
\end{equation}
where $p=3/2$ (with an additional coefficient of $\sqrt{h_0^{\eta+1}/k^2}$  omitted). 
More details on $p$-Laplacian can be found in~\cite{andreu1999existence,herrero1981asymptotic,kamin1988fundamental,de1999regularity}.
As a degenerate parabolic equation, the $p$-Laplacian imposes parabolic-type time step restrictions on  explicit schemes~\cite{bulteau2020fully}.
Next, we will show that the SI-IMEX time discretization overcomes this issue and accurately captures the correct solutions as $\eps$ transitions  from $\mathcal{O}(1)$ to $0$.

\subsection{Steady state}
For  systems  with source terms, they  tend to    admit  some physically steady states, and  many interesting phenomena can be interpreted  as  small perturbations around  these equilibrium states.
These make   the preservation of steady states   crucial in the design of numerical schemes.
For the equations~\eqref{SWe_MB4}, when system reaches a steady state, it reduces to:
\begin{equation}
  \left\{
   \begin{array}{ll}
        \nabla \cdot \bm   = 0, \\ [3mm]
  \ds   \nabla \cdot \left(\dfrac{\bm\otimes \bm}{h}\right) + \frac{1}{\eps^2}\nabla\left(\dfrac{g}{2}h^2\right)= -\frac{1}{\eps^2}gh\nabla b - \frac{1}{\eps^2}\gamma \bm.
  \end{array}\right.
  \label{SWe_MB6}
\end{equation}
Assuming  that $\mathbf{m} \equiv \mathbf{C}$ is  a constant vector, particularly when  $\mathbf{C}=\mathbf{0}$, this system reaches what is  known as ``lake at rest'' steady state, and  the system~\eqref{SWe_MB6} simplifies  to:
\begin{equation}
  \label{Still-water}
  H=h+B=\text{const}, \quad \bm=\mathbf{0},
\end{equation}
indicating that the system is in  a state of  still water.  Numerical schemes designed for system~\eqref{SWe_MB4} must ensure  this  ``still water'' is exactly  maintained  at the discrete level.  Such schemes are  referred to as  well-balanced schemes.
However, standard numerical schemes often  fail to meet this requirement, necessitating  special modifications to  achieve the well-balanced property.
The pioneering work in this area was  conducted by Bermudez and Vazquez \cite{bermudez1994upwind}. Since then,  various  well-balanced schemes  have been developed across different numerical frameworks, including finite difference, finite volume, and discontinuous Galerkin methods~\cite{leveque1998balancing,zhou2001surface,xing2005high,xing2006high,xing2006new,CGP2006,ern2008well,noelle2006well,rhebergen2008discontinuous}. 
For a comprehensive review, readers may refer to~\cite{Xing14,Kurganov18} for more details.

In more complex scenarios where $\mathbf{C}\neq \mathbf{0}$, representing  a moving equilibrium state, the design of well-balanced schemes becomes  more challenging.
Thus,  we focus solely on  the simpler ``still water'' steady state described in equation~\eqref{Still-water} in this work.

\subsection{Outline}
The remainder  of this paper is organized as follows:
In Section~\ref{sec3}, we design a class of numerical schemes for the SWEs~\eqref{SWe_MB4}.
Section \ref{sec4} provides a detailed analysis of  the AP and AA properties.
Numerical experiments are performed in Section \ref{sec5}, followed by concluding remarks in Section \ref{sec6}.

\section{Numerical scheme}
\label{sec3}
\setcounter{equation}{0}
\setcounter{figure}{0}
In this section, we aim  to design a class of  numerical schemes for the SWEs with bottom topology and Manning friction~\eqref{SWe_MB4} by employing  SI-IMEX time discretizaiton. 
The  parameter $\eps$ is allowed to  range  from $\mathcal{O}(1)$ to $0$. 
When $\eps=\mathcal{O}(1)$, the system is in the  hyperbolic regime, where shocks are  generally generated.
In this regime,  the WENO reconstructions~\cite{jiang1996efficient,shu1998essentially},  used   to discrete  the first-order derivatives,  are  crucial for accurately capturing  these shocks.
As $\eps$ goes to zero,  the  system transitions to a  friction-dominated regime   governed by a nonlinear degenerate parabolic  equation, often referred to as the 'p-Laplacian'~\cite{bulteau2020fully,huang2023high}.
Consequently,  a  central difference scheme is employed  to discrete the second-order derivatives~\cite{boscarino2019high}.
The  high-order SI-IMEX time discretization enables the scheme to overcome  parabolic-type time step restrictions, while the  WENO reconstructions effectively mitigate  the oscillations near discontinuities.
This  combination results in  a class of fully high-order schemes  that can be shown, through both theoretical analysis and numerical experiments,      to possess asymptotic preserving, asymptotically accurate and well-balanced properties. 
The details of these validations will be discussed in the next section.

\subsection{IMEX time discretization}
In numerical discretization strategies, spatial derivatives are typically discretized first, followed by temporal discretization.
However, in this study, we begin with  temporal discretization,  diverging from  the traditional method-of-lines approach.
We first develop  a first-order asymptotic preserving SI-IMEX scheme, then   extend it to  high-order schemes using   high-order IMEX-RK methods.
The detailed   procedures are as follows:
\subsubsection{{\bf First order SI-IMEX time discretization}}
From (\ref{SWe_MB4}) and  variable $H = h + B$, the first-order SI-IMEX scheme is given as follows:
\begin{subequations}
  \label{SI-S1}
  \begin{align} 
    &\frac{h^{n+1} - h^n}{\Dt}  + \nabla\cdot \bm^{n+1} =0,\label{SI-S1-1}\\
    &\frac{\bm^{n+1} - \bm^{n}}{\Dt} + \nabla\cdot\left(\frac{\bm\otimes\bm}{h}\right)^n  = - \frac{\Dt}{\eps^2} gh^{n} \nabla H^{n+1} - \frac{\Dt}{\eps^2}\frac{gk^2}{(h^{n})^{\eta}}\left|\bm^{n+1}\right|\bm^{n+1}, \label{SI-S1-2}
  \end{align}
\end{subequations}
Here, $h^n$ and $\bm^n$ represent the solutions at time $t^n$, treated explicitly in the temporal discretization. Meanwhile, $h^{n+1}$ and $\bm^{n+1}$, the solutions at time  $t^{n+1}$, are treated implicitly and require solving.

To solve the  system \eqref{SI-S1}, we first determine the momentum $\bm^{n+1}$ from the second sub-equation \eqref{SI-S1-2} of the system  as follows:
\begin{equation}
  \label{SI-S2}
  \bm^{n+1} = \bm^n - \Dt \nabla\cdot\left(\frac{\bm\otimes\bm}{h}\right)^n  - \frac{\Dt}{\eps^2} gh^{n} \nabla H^{n+1} - \frac{\Dt}{\eps^2}\frac{gk^2}{(h^{n})^{\eta}}\left|\bm^{n+1}\right|\bm^{n+1},
\end{equation}
Notably,  Eq.~\eqref{SI-S2}  is  nonlinear  in terms of  $\bm^{n+1}$.
To solve it, we first isolate the term involving $\bm^{n+1}$ by  moving the friction term to the left:
\begin{equation}
  \label{SI-S3}
\left(1 + \frac{\Dt }{\eps^2}\,\frac{gk^2\left|\bm^{n+1}\right|}{(h^{n})^{\eta}}\right)\bm^{n+1}
= \bm_{\star} - \frac{\Dt}{\eps^2}gh^{n}\nabla H^{n+1},
\end{equation}
with 
\begin{equation}
  \bm^{\star} = \bm^n - \Dt\nabla\cdot\left(\frac{\bm\otimes\bm}{h} \right)^n.
\end{equation}
Next, taking  the  $L_2$ norm of both sides of equation \eqref{SI-S3} leads to a  quadratic equation for $|\bm^{n+1}|$:
\begin{equation}
  \label{SI-S4}
\left(1 + \frac{\Dt }{\eps^2}\,\frac{gk^2\left|\bm^{n+1}\right|}{(h^{n})^{\eta}}\right)\left|\bm^{n+1}\right|
= \left|\bm_{\star} - \frac{\Dt}{\eps^2}gh^{n}\nabla H^{n+1}\right|.
\end{equation}
It is obviously that Eq.~\eqref{SI-S4} has two real roots, and we  select the positive root to ensure  non-negative of the form:
\begin{equation}
  \label{SI-S5}
  \left|\bm^{n+1}\right| = \frac{-\eps^2+\sqrt{\eps^4+\frac{4\Dt gk^2}{(h^{n})^{\eta}}|\eps^2\bm_{\star} - \Dt gh^{n}\nabla H^{n+1}|}}{\frac{2\Dt gk^2}{(h^{n})^{\eta}}}.
  \end{equation}
  Substituting Eq.~\eqref{SI-S5} into Eq.~\eqref{SI-S3} yields the value of  $\bm^{n+1}$: 
  \begin{equation}
  \label{SI-qn1}
  \bm^{n+1} = \frac{2(\eps^2\bm_{\star} - \Dt g h^{n} \nabla H^{n+1})}{\eps^2+\sqrt{\eps^4+ \frac{4\Dt gk^2}{(h^{n})^{\eta}}|\eps^2\bm_{\star} - \Dt gh^{n}\nabla H^{n+1}|}}.
\end{equation}
We now   substitute equation \eqref{SI-qn1} into equation \eqref{SI-S1-1}, resulting in the following update for $h^{n+1}$:
\begin{equation} 
  \label{SI-hn}
  h^{n+1} = h^n  - \Dt\nabla \cdot \left\{ \frac{2(\eps^2\bm_{\star} - \Dt g h^{n} \nabla H^{n+1})}{\eps^2+\sqrt{\eps^4+ \frac{4\Dt gk^2}{(h^{n})^{\eta}}|\eps^2\bm_{\star} - \Dt gh^{n}\nabla H^{n+1}|}} \right\}.
\end{equation}
The scheme~\eqref{SI-hn} can be proven to converge to the following form:
\begin{equation} 
  h^{n+1}_0 = h^n_0  + \Dt^2\nabla \cdot \left\{ \sqrt{\frac{(h^{n }_0)^{\eta+1}}{k^2}} \frac{\nabla H^{n+1}_0}{\sqrt{|\nabla H^{n+1}_0|}} \right\},
\end{equation}
demonstrating  that scheme~\eqref{SI-hn} is a first-order scheme for the diffusive limit~\eqref{lim_E4}. Therefore, the scheme  maintains   the asymptotic preserving property as $\eps$ goes to $0$, with  detailed validation  provided in Section~\ref{sec4}.
While if we treat the $L_2$ norm of momentum $|\bm|$ explicitly, i.e., $|\bm^n|$ explicitly,  it will break the AP property of the scheme, as discussed in  Remark~\ref{REM-1}.

Now, we want to  solve  Eq.~\eqref{SI-hn} to obtain the value of $h^{n+1}$, and then substitute it into~\eqref{SI-qn1} to update the value of $\bm^{n+1}$. This process completes the update of the numerical solutions at time $t^{n+1}$.
Since  $H^{n+1} = h^{n+1} + B$ depends linearly  on $h^{n+1}$, Eq.~\eqref{SI-hn} can be viewed  as a nonlinear equation in terms of $h^{n+1}$.
We solve this equation using  Picard iteration, a method  successfully   employed  in our previous work \cite{huang2023high}, and  we only give a brief review here.
Additionally, when the system  reaches or is near a steady state,  where  $\nabla H \approx \mathbf{0}$,  this  results in $\eps^2\bm^{\star} - \Dt gh^{n}\nabla H^{n+1} \approx \mathbf{0}$.
To maintain equilibrium state,  consistent  treatment of $\nabla H$ is required in both  numerator and denominator of  Eq.~\eqref{SI-hn}.
If not, as $\varepsilon$ is small enough and  the denominator approaches zero simultaneously,  it may lead  to a   breakdown of the scheme.

Therefore, the iterations proceed  as follows:
\begin{enumerate}
  \item {\bf{Check for Steady State:}} Calculate  $||\nabla H||_1$ to verify whether the system has reached a steady state. If $||\nabla H||_1 < \delta$ (where $\delta$ is a  small tolerance, e.g., $\delta = 10^{-9}$  unless otherwise specified), the scheme is terminated and we set $h^{n+1}=h^n$ to ensure the well-balanced property of the scheme.  Here, $||\nabla H||_1$ is the $L_1$ norm of $\nabla H$. 
  \item {\bf{Continue the Iteration:}} the iteration continues with the following step, 
        \begin{equation}
          h^{n+1,k+1} = h^n - \Dt\nabla \cdot \left\{ \frac{2(\eps^2\bm_{\star} - \Dt g h^{n} \nabla H^{n+1,k+1})}{\eps^2+\sqrt{\eps^4+ \frac{4\Dt gk^2}{(h^{n})^{\eta}}|\eps^2\bm_{\star} - \Dt gh^{n}\nabla H^{n+1,k}|}} \right\}, \quad k \ge 0,
        \end{equation}
        with $h^{n+1,0} = h^n$. The iteration will   stop  until $||h^{n+1,k+1} - h^{n+1,k}||_{1} < \delta$, where $\delta$ is the same constant as mentioned  above.  We then set $h^{n+1}= h^{n+1,k+1}$.
\end{enumerate}

Now that we have updated the value of $h^{n+1}$,   we proceed to  solve for  $\bm^{n+1}$ using  Eq.~\eqref{SI-qn1}. The formatting is now complete.

\begin{rem}
  \label{REM-1}
  The  implicit treatment of the $L_2$ norm of momentum $|\bm^{n+1}|$ in the Manning friction term is necessary. Otherwise, if  we treat it explicitly,  as shown below
  \begin{equation}
    \label{S3-rem-1}
    \bm^{n+1} = \bm^n - \Dt \nabla\cdot\left(\frac{\bm\otimes\bm}{h}\right)^n  - \frac{\Dt}{\eps^2} gh^{n} \nabla H^{n+1} - \frac{\Dt}{\eps^2}\gamma^{n}\bm^{n+1},
  \end{equation}
  with $\gamma^{n} = \frac{gk^2}{(h^{n})^{\eta}}|\bm^{n}|$. Solving this equation yields 
  \begin{equation}
    \label{S3-rem-2}
    \bm^{n+1} = \frac{\eps^2}{\eps^2+\Dt \gamma^{n}}\left\{\bm^{n} - \Dt\nabla\cdot\left(\frac{\bm \otimes \bm}{h}\right)^n - \frac{\Dt}{\eps^2} gh^n\nabla H^{n+1} \right\}.
  \end{equation}
  Now,  substituting Eq.~\eqref{S3-rem-2} into Eq.~\eqref{SI-S1-1}, we obtain:
  \begin{equation}
    \label{S3-rem-3}
    h^{n+1} = h^{\star} + \Dt^2\nabla\cdot\left(\frac{1}{\eps^2+\Dt\gamma^n}gh^n\nabla H^{n+1}\right),
  \end{equation}
  with $h^{\star} = h^{n} - \Dt\nabla \cdot \left\{\frac{\eps^2}{\eps^2+\Dt\gamma^{n}}\left[\bm^{n} - \nabla \cdot\left(\dfrac{\bm\otimes\bm}{h}\right)^n\right]\right\}$.
  As $\eps$ goes to zero, Eq.~\eqref{S3-rem-3} converges to 
  \begin{equation}
    \label{S3-rem-4}
    h^{n+1} = h^{n} +\Dt\nabla\cdot\left(\frac{gh^n\nabla H^{n+1}}{\gamma^n}\right),
  \end{equation}
  it is obvious that Eq.~\eqref{S3-rem-4} cannot converge to the limiting equation \eqref{lim_E4} as $\eps\rightarrow 0$, meaning that the scheme does not have the asymptotic preserving property in the diffusive limit. 
  And the necessity of implicit treatment of the $L_2$ norm of momentum $|\bm^{n+1}|$ in the friction term for maintaining  the asymptotic preserving property is further demonstrated through numerical experiments.
\end{rem}

\subsubsection{{\bf High order  SI-IMEX-RK scheme}}
In this section, we extend the first-order SI-IMEX scheme \eqref{SI-S1} to  high-order SI-IMEX-RK schemes by  employing   the high-order semi-implicit approach presented in \cite{boscarino2016high} . 

We start to consider the general class of autonomous problems of the form
\begin{equation}\label{Eq1}
u' = F(u,u), \quad u(t_0) = u_0.
\end{equation}
where $F: \mathbb{R}^n \times \mathbb{R}^n \to \mathbb{R}^n$ is a sufficiently regular mapping.
\\
We observe that we can rewrite system (\ref{SWe_MB4}) with $H = h+B$, as a partitioned one
\begin{equation}\label{Eq2}
	\left\{
	\begin{array}{l}
\ds		\frac{d u_E}{d t} = F(u_E,u_I),\\[3mm]
\ds                 \frac{d u_I}{d t} = F(u_E,u_I), 
	\end{array}
	\right.
\end{equation}
with initial conditions $u_E(t) = u_I(t) = u_0$ where 
$$
F(u_E,u_I) = \left(\begin{array}{c}
-\nabla m_I\\[2mm]
- \nabla \cdot \left(\dfrac{\bm\otimes \bm}{h}\right)_E - \dfrac{1}{\eps^2}gh_E\nabla H_I - \dfrac{g k^2}{\eps^2 (h_E)^\eta}|\bm_I|\bm_I
\end{array}
\right)
$$
with $u_E =(hu_E,\bm_E)$ and  $u_I =(h_I,\bm_I)$ and $H_I = h_I+B$. 

In such case the solution of system (\ref{Eq2}) satisfies $u(t) = u^{*}(t)$ for any $t \ge t_0$ and is also a solution of Eq.(\ref{Eq1}). System (\ref{Eq2}) is a particular case of partitioned system \cite{hairer1993solving2}, with an additional computational cost since we double the number of variables. Let us remark that the duplication of the unknowns in (\ref{Eq2}) does not take place if appropriate choices of the IMEX-RK scheme are considered  \cite{boscarino2016high}.

Now we are ready to introduce the semi-implicit strategy for solving problems of the form (\ref{Eq2}). This strategy involves treating the first variable $u^*$ explicitly and the second variable $u$ implicitly. In order to do that we shall adopt high-order IMEX R-K schemes and the coefficients of the method are usually represented in a double Butcher $tableau$~\cite{butcher2016} as follows 
\begin{equation}\label{DBT}
  \begin{array}{c|c}
  \tilde{c} & \tilde{A}\\
  \hline
  \vspace{-0.25cm}
  \\
  & \tilde{b}^T \end{array} \ \ \ \ \ \qquad
  \begin{array}{c|c}
  {c} & {A}\\
  \hline
  \vspace{-0.25cm}
  \\
  & {b^T} \end{array},
\end{equation}
where $\tilde{A} = (\tilde{a}_{ij})$, is a $s \times s$ matrix for the explicit scheme, with $\tilde{a}_{ij} = 0$ for $j \ge i$ and  $A = (\tilde{a}_{ij})$ is a $s \times s$ matrix for the implicit one.  The vectors $\tilde{c}=(\tilde{c}_1,...,\tilde{c}_s)^T$, $\tilde{b}=(\tilde{b}_1,..., \tilde{b}_s)^T$, and $c = (c_1, . . . , c_s)^T$ , $b = (b_1, . . . , b_s)^T$ complete the characterization of the scheme. The coefficients $\tilde{c}$ and $c$ are given by the usual relation
\begin{equation}\label{c_c_i}
\tilde{c}_i = \sum_{j = 1}^{i-1}\tilde{a}_{ij}, \quad {c}_i = \sum_{j = 1}^{i}{a}_{ij}.
\end{equation}
For the implicit part, diagonally implicit R-K (DIRK) schemes are often employed \cite{hairer1993solving2}.

Then the SI-IMEX-RK scheme applied to (\ref{Eq2}) is implemented as follows. First we set $u^*_n = u_n$ and compute the stage values  for $i = 1,...,s$,
\begin{align}\label{IntStag}
U^{i}_E = u^n + \Delta t \sum_{j = 1}^{i-1} \tilde{a}_{ij}K_j, \quad U^{i}_I = u^n + \Delta t \sum_{j = 1}^{i} {a}_{ij}K_j,
\end{align}
with the numerical solution
\begin{align}\label{SolSI}
u^{n+1}_E =  u^n + \Delta t {\sum_{i=1}^s} \tilde{b}_i K_i, \quad u^{n+1}_I = u^n + \Delta t {\sum_{i=1}^s} b_i K_i,
\end{align}
where $K_i = F(U^i_E,U^i_I)$, are the RK fluxes. 
\\
Furthermore from now on we shall adopt IMEX R-K schemes with $\tilde{b}_i = b_i$, for $i = 1, ...,s$. We observe that because $\tilde{b}_i = b_i$, for $i = 1, ...,s$ then the numerical solutions are the same, i.e. if $u^*_0 = u_0$ then $u^*_{n} = u_{n}$,  for all $n\ge 0$, therefore the duplication of the system is only apparent \cite{boscarino2016high} and the system reads
\begin{align}\label{IntStag2}
U^{i}_E = u^n + \Delta t \sum_{j = 1}^{i-1} \tilde{a}_{ij}K_j, \quad U^{i}_I = u^n + \Delta t \sum_{j = 1}^{i-1} {a}_{ij}K_j + \Delta t {a}_{ii}K_i,
\end{align}
with the numerical solution
\begin{align}\label{SolSI2}
u^{n+1} = u^n + \Delta t {\sum_{i=1}^s} b_i K_i.
\end{align}
Furthermore, to streamline and efficiently solve the algebraic equations corresponding to the implicit part in the limit case of $\varepsilon \to 0$,  we require that the diagonally implicit Runge-Kutta (DIRK) scheme, is stiffly accurate (SA),  which implies   $b_i = a_{si} = 0\,\, \text{for}\,\, i = 1, \cdots s$. An example of IMEX-RK scheme SA with $b_i = \tilde{b}_i$ is reported later in the numerical tests section. 

In the following we explicitly show how the SI approach applies to equation (\ref{SWe_MB4}), with $H = h+B$.  Assuming that $(h^n,\bm^n)^T$ are known, from (\ref{IntStag2}) and (\ref{SolSI2})  we introduce minor changes to the evaluation of the RK fluxes $K_i$, i.e.,
\begin{align}\label{IntStag2_2}
U^{i}_E = u^n + \Delta t \sum_{j = 1}^{i-1} \tilde{a}_{ij}\bar{K}_j, \quad U^{i}_I = u^n + \Delta t \sum_{j = 1}^{i-1} {a}_{ij}\bar{K}_j + \Delta t {a}_{ii}K_i,
\end{align}
with the numerical solution
\begin{align}\label{SolSI2_2}
u^{n+1} = u^n + \Delta t {\sum_{i=1}^s} b_i \bar{K}_i,
\end{align} 
where we use the new quantity 
$$
\bar{K}_i =  \left(\begin{array}{c}
-\nabla m_I\\[2mm]
- \nabla \cdot \left(\dfrac{\bm\otimes \bm}{h}\right)_E - \dfrac{1}{\eps^2}\nabla\left(\dfrac{1}{2}gh^2\right)_I - \dfrac{1}{\varepsilon^2} g h_I\nabla B - \dfrac{1}{\varepsilon^2}\dfrac{g k^2}{(h_I)^\eta}|\bm_I|\bm_I
\end{array}
\right)
$$
 then the solutions for next time step $t^{n+1}$ can be updated by 
\begin{subequations}
  \label{SI-NS-Hnp1}
  \begin{align}
    &h^{n+1} = h^n - \Dt \sum_{i=1}^{s}b_{i}\nabla \cdot \bm^{(i)}_I,\\[3mm]
    &\bm^{n+1} = \bm^n - \Dt \sum_{i=1}^{s} b_i \left\{\nabla\cdot\left(\frac{\bm\otimes\bm}{h} \right)_E + \frac{1}{\eps^2}\nabla\left(\frac{1}{2}gh^2\right)_I + \frac{1}{\eps^2}gh_I\nabla B + \frac{1}{\eps^2} \frac{gk^2}{h^{\eta}_I}|\bm_I|\bm_I\right\}^{(i)},
  \end{align}
\end{subequations}
with  the values of  each internal  stage: 
\begin{subequations}
\label{SI-NS-HE}
\begin{align}
  &h^{(i)}_E = h^n - \Dt \sum_{j=1}^{i-1}\tilde{a}_{ij}\nabla \cdot \bm^{(j)}_I,\\[3mm]
  &\bm^{(i)}_E = \bm^n - \Dt \sum_{j=1}^{i-1} \tilde{a}_{ij}
  \left\{\nabla\cdot\left(\frac{\bm\otimes\bm}{h} \right)_E + \frac{1}{\eps^2}\nabla\left(\frac{1}{2}gh^2\right)_I + \frac{1}{\eps^2}gh_I\nabla B + \frac{1}{\eps^2} \frac{gk^2}{h^{\eta}_I}|\bm_I|\bm_I\right\}^{(j)}, \label{SI-NS-HE-2}
\end{align}
\end{subequations}
and 
\begin{subequations}
  \label{SI-NS-HI}
  \begin{align}
    &h^{(i)}_I = h^n - \Dt \sum_{j=1}^{i}{a}_{ij}\nabla \cdot \bm^{(j)}_I, \label{SI-NS-HI-1}\\[3mm]
    &\bm^{(i)}_I =\bm^{(i)}_* - \Dt a_{ii} \left\{\nabla\cdot\left(\frac{\bm\otimes\bm}{h} \right)_E + \frac{1}{\eps^2}gh_E\nabla H_I+ \frac{1}{\eps^2} \frac{gk^2}{h^{\eta}_E}|\bm_I|\bm_I\right\}^{(i)}.  \label{SI-NS-HI-2} 
  \end{align}
\end{subequations}
with 
$$
\bm^{(i)}_* = \bm^n - \Dt \sum_{j=1}^{i-1} {a}_{ij}  \left\{\nabla\cdot\left(\frac{\bm\otimes\bm}{h} \right)_E + \frac{1}{\eps^2}\nabla\left(\frac{1}{2}gh^2\right)_I + \frac{1}{\eps^2}gh_I\nabla B + \frac{1}{\eps^2} \frac{gk^2}{h^{\eta}_I}|\bm_I|\bm_I\right\}^{(j)}.
$$
The updates  for the  equations~\eqref{SI-NS-HI} is similar to the  first-order scheme~\eqref{SI-S1} by employing the Picard iteration, detailed process  will be omitted to save space.
\subsection{High order spatial discretizations}\label{Dspace}
For the spatial discretization strategies,  we  employ the    WENO reconstrctions  as proposed by Jiang and Shu \cite{jiang1996efficient,shu1998essentially},  combined with a central difference scheme~\cite{boscarino2019high}.
This choice is motivated by the need to accommodate validations for the parameter $\eps$, which can vary  from $\mathcal{O}(1)$ to $0$.
This flexiblity  enables   the system to  transition from a hyperbolic to a diffusive regime.
Shocks  occurred when $\eps=\mathcal{O}(1)$, while a nonlinear parabolic `p-Laplacian' emerges as $\eps$ goes to zero.
Thus,  WENO reconstrction, utilized for first-order derivatives in   scheme~\eqref{SI-NS-Hnp1} and interal stages~\eqref{SI-NS-HE} and \eqref{SI-NS-HI},  effectively controls  the oscillations near discontinuities, ensuring the stability of the scheme.
Here, we implement  three different types of reconstrctions, that is,  WENO  with  Lax-Friedrichs (LF) splitting $\nabla_{LW }$, WENO reconstrction with non-viscous $\nabla_{W}$ and well-balanced WENO reconstrction $\nabla_{W}^{WB}$.
To ensure the well-balanced property of the scheme,  we make  two minor modifications. First, in the flux splitting for  the conservation of water depth $h$, we replaced the water depth $h$ with the surface level $H$, 
\begin{equation}
 \bm^{\pm} = \frac{1}{2}\left( \bm \pm \Lambda H\right), \quad \text{with} \quad \Lambda = \max_{{\bu,h}} \{|\bu|+\min(1,1/\eps)\sqrt{gh}\},
\end{equation}
which prevents additional errors are introduced by the viscous term at equilibrium state. 
Second, as the system achieves steady state,    the   gradient of pressure will be  balanced by  bottom topology.  Thus,   we  split the bottom topology  into two parts: 
\begin{equation}
  \label{WB-E3}
  gh\nabla B = -gH\nabla B + \nabla \left(\frac{1}{2}gB^2\right), 
\end{equation}
and  we discrete  both  $\nabla B$ and $\nabla \left(\frac{1}{2}gB^2\right)$ terms in the same manner as  $\nabla\left(\frac{1}{2}gh^2\right)$   using  discretizaiton $\nabla_{W}^{WB}$, ensuring that  
\begin{equation}
  \begin{aligned}
    &~ \frac{1}{\eps^2}\nabla_{W}^{WB}  \left(\frac{1}{2}gh^2\right) + \frac{1}{\eps^2}gH\nabla_W^{WB}B - \frac{1}{\eps^2}\nabla_{W}^{WB} \left(\frac{1}{2}gB^2\right) \\[3mm]
    & = \frac{1}{\eps^2}\nabla_{W}^{WB}\left(\frac{1}{2}gh^2  + gHB - \frac{1}{2}gB^2\right)\\[3mm]
     & =\frac{1}{\eps^2}\nabla_{W}^{WB}\left(\frac{1}{2}gH^2\right) \\[3mm]
     & = 0,
  \end{aligned}
\end{equation}
which indicates that  the equilibrium state  is maintained exactly at discrete level.

While for the diffusive limit,  the   high-order centeral  difference scheme is utilized  to approximate second-order derivative terms.
These combinations  are  enssential for  accurately  capturing shocks in the compressible regime and constructing a high-order solver for limiting equations~\eqref{lim_E4},  making   the schemes  suitable for a wide range of  $\eps$ while preserving   the well-balanced property.

\begin{rem}
  Stiffness is encountered  during the update of $\bm_E^{(i)}$ in Eq.~\eqref{SI-NS-HE-2}, primarily due to the amplification of round-off errors, which scale with $\mathcal{O}(\frac{1}{\eps^2})$.
  Although a well-balanced technique is employed  to  balance  the  gradient of pressure, bottom topography, and Manning friction  term in the momentum flux, this amplification ultimately leads to the  breakdown of the scheme.
  In contrast, no such stiffness  occurrs during the  update of  $\bm_I^{(i)}$ in Eq.~\eqref{SI-NS-HI-2}.
  To address this, we focus on updating the implicit solution $\bm_I^{(i)}$. As  long as the diagonal elements $a_{ii}\ (i=1,2,\cdots s)$ of the implicit RK table are non-zero,   the $i$-th layer flux  can be updated  as follows:
  \begin{equation}
  \mathcal{F}_{\bm}^{(i)} = \frac{\bm^n - \bm_I^{(i)} - \sum_{j=1}^{i-1} a_{ij}\Delta t \mathcal{F}_{\bm}^{(j)}}{a_{ii}\Delta t},
  \end{equation}
  where 
  \begin{equation}
  \mathcal{F}_{\bm}^{(j)} =  \left\{\nabla_{LW} \cdot \left(\dfrac{\bm\otimes \bm}{h}\right)_E + \dfrac{1}{\eps^2}\nabla_{W}^{WB}\left(\dfrac{1}{2}gh^2\right)_I + \frac{1}{\eps^2}gH_I\nabla_W^{WB}B - \frac{1}{\eps^2}\nabla_{W}^{WB} \left(\frac{1}{2}gB^2\right) + \dfrac{1}{\varepsilon^2}\dfrac{g k^2}{(h_I)^\eta}|\bm_I|\bm_I\right\}^{(j)},
  \end{equation}
  This update of the flux successfully eliminates the stiffness from the scheme.
\end{rem}

\subsection{\bf{Algorithm flowchart}}
In this part, we will  combine   the high-order IMEX-RK methods with  the finite difference WENO reconstrctions and  the  centeral  difference schemes to develop the following  fully high-order schemes for system \eqref{SWe_MB4}: 
\begin{subequations}
  \begin{align}
    &h^{n+1} = h^n - \Dt \sum_{i=1}^{s}b_{i}\nabla_{LW} \cdot \bm^{(i)}_I,\\[3mm]
    &\bm^{n+1} = \bm^n - \Dt \sum_{i=1}^{s} b_i\mathcal{F}_{\bm}^{(i)},
  \end{align}
\end{subequations}
with   the values of  each interal  stages given by:
\begin{subequations}
\begin{align}
  &h^{(i)}_E = h^n - \Dt \sum_{j=1}^{i-1}\tilde{a}_{ij}\nabla_{LW} \cdot \bm^{(j)}_I,\\[3mm]
  &\bm^{(i)}_E = \bm^n - \Dt \sum_{j=1}^{i-1} \tilde{a}_{ij} \mathcal{F}_{\bm}^{(j)}
\end{align}
\end{subequations}
and 
\begin{subequations}
  \label{SI-NS-HI-final}
  \begin{align}
    &h^{(i)}_I = h^n - \Dt \sum_{j=1}^{i}{a}_{ij}\nabla_{LW} \cdot \bm^{(j)}_I, \label{SI-NS-HI-final-1}\\[3mm]
    &\bm^{(i)}_I = \bm^n - \Dt \sum_{j=1}^{i-1} {a}_{ij}\mathcal{F}_{\bm}^{(j)} - \Dt a_{ii} \left\{\nabla_{LW}\cdot\left(\frac{\bm\otimes\bm}{h} \right)_E + \frac{1}{\eps^2}gh_E\nabla H_I+ \frac{1}{\eps^2} \frac{gk^2}{h^{\eta}_E}|\bm_I|\bm_I\right\}^{(i)}.\label{SI-NS-HI-final-2} 
  \end{align}
\end{subequations}
The update   of equations~\eqref{SI-NS-HI-final} is similar to the  first-order scheme~\eqref{SI-S1}; specifically, we can solve   $\bm_I^{(i)}$ from Eq.~\eqref{SI-NS-HI-final-2}: 
\begin{equation}
  \label{S3-mi-final}
  \bm_I^{(i)} = \frac{2\left(\eps^2\bm_{\star}^{(i)} - a_{ii}\Dt gh_E^{(i)}\nabla H_I^{(i)}\right)}{\eps^2 + \sqrt{\eps^4 + \frac{4a_{ii}\Dt gk^2}{(h_E^{(i)})^{\eta}} |\eps^2 \bm_{\star}^{(i)} - a_{ii}\Dt gh_E^{(i)}\nabla H_I^{(i)}|}},
\end{equation}
where  
\begin{equation*} 
   \bm_{\star}^{(i)} = \bm^n - \Dt \sum_{j=1}^{i-1} {a}_{ij}\mathcal{F}_{\bm}^{(j)} - a_{ii}\Dt \nabla_{LW} \cdot \left( \frac{\bm\otimes\bm}{h}\right)_E^{(i)}.
\end{equation*}
Next, substituting Eq.~\eqref{S3-mi-final} into Eq.~\eqref{SI-NS-HI-final-1} yields 
\begin{equation}
  \label{SI-S3-hi-final}
  h^{(i)}_I = h_{\star}^{(i)} - a_{ii}\Dt\nabla \cdot \left\{ \frac{2\left(\eps^2\bm_{\star}^{(i)} - a_{ii}\Dt gh^{(i)}_E \nabla H^{(i)}_I\right)}{\eps^2 + \sqrt{\eps^4 + \frac{4a_{ii}\Dt gk^2}{(h_E^{(i)})^{\eta}} |\eps^2 \bm_{\star}^{(i)} - a_{ii}\Dt gh_E^{(i)}\nabla H_I^{(i)}|}}\right\}.
 \end{equation} 
 with 
\begin{equation*}
  h_{\star}^{(i)} = h^n - \Dt \sum_{j=1}^{i-1} a_{ij} \nabla_{LW} \cdot \bm_I^{(j)}.
\end{equation*}
We will check whether the steady state is achieved;  if not,  a Picard iteration will be  utilized in  the following form:  
 \begin{equation}
  \label{SI-Picard-S1-final}
  h^{(i),k+1}_I=h_{\star}^{(i)} - a_{ii}\Dt\nabla \cdot \left\{ \frac{2\left(\eps^2\bm_{\star}^{(i)} - a_{ii}\Dt gh^{(i)}_E \nabla H^{(i),k+1}_I\right)}{\eps^2 + \sqrt{\eps^4 + \frac{4a_{ii}\Dt gk^2}{(h_E^{(i)})^{\eta}} |\eps^2 \bm_{\star}^{(i)} - a_{ii}\Dt gh_E^{(i)}\nabla H_I^{(i),k}|}}\right\}, \forall k\geq0,
 \end{equation}
 until $||h^{(i),k+1}_I - h^{(i),k}_I||< \delta$ is satisfied, and we set $h^{(i)}_I=h^{(i),k+1}_I$.
 Here, the second-order derivatives terms can be viewed as $\nabla\cdot\left(a\nabla  H \right)$ with  a nonlinear coefficient $a = \dfrac{gh}{\eps^2 + \sqrt{\eps^4 + \frac{4a_{ii}\Dt gk^2}{h^{\eta}} |\eps^2 \bm_{\star\star}- a_{ii} \Dt gh\nabla H|}}$, this coefficient  is  nonlinear dependent on  $H$, and  we will employ the high-order   central  difference schemes to discretize it.

 After solving $h^{(i)}_I$,  update the $\bm^{(i)}_I$ as follows: 
    \begin{equation}
      \label{mif}
      \bm_I^{(i)} = \frac{2\left\{\eps^2\bm_{\star}^{(i)} - a_{ii}\Dt\nabla_W^{WB}\left(\frac{1}{2}gh_I^2\right)^{(i)} - a_{ii}\Dt gH_I^{(i)}\nabla_{W}^{WB}B + a_{ii}\Dt \nabla_W^{WB}\left(\frac{1}{2}gB^2\right) \right\}}{\eps^2 + \sqrt{\eps^4 + \frac{4a_{ii}\Dt gk^2}{(h_I^{(i)})^{\eta}} |\eps^2 \bm_{\star}^{(i)} - a_{ii}\Dt \nabla_W^{WB}\left(\frac{1}{2}gh_I^2\right)^{(i)} - a_{ii}\Dt gH_I^{(i)}\nabla_{W}^{WB}B + a_{ii}\Dt\nabla_W^{WB}\left(\frac{1}{2}gB^2\right)|}}.
    \end{equation}
Notice that as the  system achieves equilibrium, the value of $\bm_I^{(i)}$ updated by Eq.~\eqref{mif} is expected to $\mathbf{0}$.  However,  if $\eps$ goes to   zero  simultaneously,  instability may arise  due to the occurrence of the  $\frac{0}{0}$ situation.
To prevent this, we implement a cut-off criterion:  if  condition 
\[
\Big|\eps^2 \bm_{\star}^{(i)} - a_{ii}\Dt \nabla_W^{WB}\left(\frac{1}{2}gh_I^2\right)^{(i)} - a_{ii}\Dt g H_I^{(i)}\nabla_{W}^{WB}B + a_{ii}\Dt\nabla_W^{WB}\left(\frac{1}{2}gB^2\right)\Big|<\xi,
\]
and $|\bm^n| < \xi$ is satisfied, where $\xi$ is a round-off constant (set to $\xi=10^{-15}$ unless otherwise specified),  we set   $\bm_I^{(i)}=\mathbf{0}$ to  ensure the system remains in a  steady state.

\section{Asymptotic preserving (AP) and Asymptotically Accurate (AA) property}
\label{sec4}
\setcounter{equation}{0}
\setcounter{figure}{0}
In this section, we aim to validate the asymptotic preserving (AP) property of the firs-order scheme \eqref{SI-S1}, as well as  the asymptotically accurate (AA) property of the high-order schemes \eqref{SI-NS-Hnp1}-\eqref{SI-NS-HI}. 
We focus on the AP and AA analysis on time discretization, while keeping the space continuous.
Additionally, we assume that  all variables can  be expressed  in  the following Chapman-Enskog form:
\begin{equation}
  \label{S4-E1}
  h^{n}(\bx) = h_0^n(\bx) +\mathcal{O}(\eps), \quad \bm^{n}(\bx) = \bm_0^n(\bx) + \mathcal{O}(\eps),
\end{equation}
with $H^n_0(\bx) = h_0^n(\bx) + B(\bx)$, and the leading-order terms,  satisfy the limit equation \eqref{lim_E3}:
\begin{equation}
  \label{S4-E2}
  \bm_0^n = -\left(\sqrt{\frac{h_0^{\eta+1}}{k^2}}\frac{\nabla H_0}{\sqrt{|\nabla H_0|}}\right)^n,
\end{equation}
where $h^n(\bx) = h(\bx,t^n)$ and $\bm^n(\bx) = \bm(\bx,t^n)$.
We assume that the initial data  are well-prepared in the sense of (\ref{Exp1}), i.e., $h^n(\bx) = h(\bx,t^n)$ and $\bm^n(\bx) = \bm(\bx,t^n)$ admit the decomposition  \eqref{S4-E1} and \eqref{S4-E2} at $n=0$.

\subsection{AP property}
We now  present the AP  property of the first-order scheme \eqref{SI-S1} and   provide the complete proof.
\begin{thm}
The first-order scheme \eqref{SI-S1} is AP in the sense that as $\eps\to 0$ the limit scheme is a consistent approximation of the limit equation \eqref{lim_E4}  as long as the initial conditions are well-prepared. 
\end{thm}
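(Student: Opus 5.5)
The plan is to pass to the limit $\eps\to 0$ directly in the closed-form update \eqref{SI-qn1}--\eqref{SI-hn}, which already encodes the implicit treatment of the friction term, and to argue by induction on $n$ that the well-prepared structure \eqref{S4-E1}--\eqref{S4-E2} propagates from one time level to the next. Assume that at time $t^n$ we have $h^n=h_0^n+\mathcal{O}(\eps)$ and $\bm^n=\bm_0^n+\mathcal{O}(\eps)$, with $h_0^n$ bounded away from zero and $\bm_0^n$ given by \eqref{S4-E2}. We also make the ansatz $h^{n+1}=h_0^{n+1}+\mathcal{O}(\eps)$ and $\bm^{n+1}=\bm_0^{n+1}+\mathcal{O}(\eps)$. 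Then $\bm_\star=\bm^n-\Dt\nabla\cdot(\bm\otimes\bm/h)^n$ is $\mathcal{O}(1)$, so $\eps^2\bm_\star=\mathcal{O}(\eps^2)$. Hence the quantity $\eps^2\bm_\star-\Dt g h^n\nabla H^{n+1}$ appearing in both the numerator and the denominator of \eqref{SI-qn1} equals $-\Dt g h_0^n\nabla H_0^{n+1}+\mathcal{O}(\eps)$.

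The key computation comes next. Assuming $\nabla H_0^{n+1}\neq 0$, the $\eps^4$ and $\eps^2$ contributions in the denominator of \eqref{SI-qn1} vanish in the limit, and the denominator tends to
\[
\sqrt{\tfrac{4\Dt gk^2}{(h_0^n)^\eta}\,\Dt g h_0^n|\nabla H_0^{n+1}|}=2\Dt g k\,(h_0^n)^{\frac{1-\eta}{2}}\sqrt{|\nabla H_0^{n+1}|}.
\]
Dividing the limiting numerator $-2\Dt g h_0^n\nabla H_0^{n+1}$ by this gives
\[
\bm_0^{n+1}=-\sqrt{\frac{(h_0^n)^{\eta+1}}{k^2}}\,\frac{\nabla H_0^{n+1}}{\sqrt{|\nabla H_0^{n+1}|}}.
\]
This is exactly the semi-discrete (explicit-coefficient, implicit-gradient) version of \eqref{lim_E3}, i.e.\ of \eqref{lim_E4-2}. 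The $\mathcal{O}(1)$ balance of the momentum equation \eqref{SI-S3} can be checked equivalently: multiply by $\eps^2$ and let $\eps\to0$ to obtain $g h_0^n\nabla H_0^{n+1}=-\frac{gk^2}{(h_0^n)^\eta}|\bm_0^{n+1}|\bm_0^{n+1}$. Inserting the limit momentum into \eqref{SI-S1-1} (equivalently into \eqref{SI-hn}) yields the limit scheme
\[
h_0^{n+1}=h_0^n+\Dt\,\nabla\cdot\left(\sqrt{\frac{(h_0^n)^{\eta+1}}{k^2}}\frac{\nabla H_0^{n+1}}{\sqrt{|\nabla H_0^{n+1}|}}\right),
\]
up to the $\Dt$-scaling convention used in \eqref{SI-S1}. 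This is a backward-Euler-type, linearly implicit discretization of \eqref{lim_E4-1}, hence a consistent first-order approximation of \eqref{lim_E4}. Moreover, $\bm_0^{n+1}$ satisfies \eqref{S4-E2} at level $n+1$ with $h_0^{n}$ in place of $h_0^{n+1}$ in the coefficient. The $\mathcal{O}(\Dt)$ discrepancy this introduces is harmless for consistency, and the well-prepared structure therefore persists, which closes the induction from the initial data at $n=0$.

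The main obstacle is justifying the expansion near degenerate points where $\nabla H_0^{n+1}=0$. There the map $\mathbf{v}\mapsto \mathbf{v}/\sqrt{|\mathbf{v}|}$ is continuous but not differentiable, and both numerator and denominator of \eqref{SI-qn1} vanish as $\eps\to0$. I would handle this by noting that the expression in \eqref{SI-qn1} is bounded by $|\eps^2\bm_\star-\Dt g h^n\nabla H^{n+1}|/\eps^2$ and also by $C\sqrt{|\eps^2\bm_\star-\Dt gh^n\nabla H^{n+1}|}$. The latter bound forces $\bm^{n+1}\to 0=\bm_0^{n+1}$ at such points, consistent with \eqref{lim_E3}; the exactly steady case is covered by the well-balanced branch of the algorithm, which sets $h^{n+1}=h^n$ and $\bm^{n+1}=\mathbf{0}$. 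A second, minor point is to require positivity of $h_0^n$ so that $(h_0^n)^{-\eta}$ stays bounded. Finally, I would contrast the result with Remark~\ref{REM-1}, where freezing $|\bm^n|$ yields the wrong limit \eqref{S3-rem-4}.
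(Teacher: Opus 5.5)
Your proposal is correct and follows essentially the paper's argument: the paper lets $\eps\to0$ formally in \eqref{SI-S1}/\eqref{SI-S2} to get the balance $gh_0^n\nabla H_0^{n+1}=-\frac{gk^2}{(h_0^n)^{\eta}}|\bm_0^{n+1}|\bm_0^{n+1}$, solves it for $\bm_0^{n+1}$ as in \eqref{S4-AP-E2}, inserts this into the mass update to obtain the consistent first-order limit scheme \eqref{S4-AP-E3}, and propagates by induction from well-prepared data---which is exactly your check on \eqref{SI-S3}; your direct limit of the closed form \eqref{SI-qn1} and your handling of points where $\nabla H_0^{n+1}=0$ (and of $h_0^n>0$) are extra care the paper does not take. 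One small correction: the limit scheme is semi-implicit (coefficient frozen at $h_0^n$) but not linearly implicit, since $\nabla H_0^{n+1}/\sqrt{|\nabla H_0^{n+1}|}$ is nonlinear in $h_0^{n+1}$ (hence the paper's Picard iteration), although this does not affect the consistency conclusion.
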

\begin{proof}
  This  property can be  proven via  mathematical induction. As $\eps$  approaches zero, the scheme~\eqref{SI-S1} with the  modification \eqref{SI-S2} simplifies to:
  \begin{subequations}
    \label{S4-AP-E1}
    \begin{align} 
      &\frac{h^{n+1}_0 - h^n_0}{\Dt}  + \nabla\cdot \bm^{n+1}_0 =0, \label{S4-AP-E1-1}\\
      &  gh^{n}_0 \nabla H^{n+1}_0 = - \frac{gk^2}{(h^{n}_0)^{\eta}}|\bm^{n+1}_0|\bm^{n+1}_0.\label{S4-AP-E1-2}
    \end{align}
  \end{subequations}
  Solving equation~\eqref{S4-AP-E1-2}, we find:
  \begin{equation}
    \label{S4-AP-E2}
    \bm^{n+1}_0 = -\sqrt{\frac{(h^n_0)^{\eta+1}}{k^2}}\frac{\nabla H^{n+1}_0}{\sqrt{|\nabla H^{n+1}_0|}},
  \end{equation}
  then, substituting Eq.~\eqref{S4-AP-E2} into~Eq.~\eqref{S4-AP-E1-1} yields:
  \begin{equation}
    \label{S4-AP-E3}
    \frac{h^{n+1}_0 - h^n_0}{\Dt}   = \nabla\cdot\left(\sqrt{\frac{(h^n_0)^{\eta+1}}{k^2}}\frac{\nabla H^{n+1}_0}{\sqrt{|\nabla H^{n+1}_0|}}\right),
  \end{equation}
  which is a consistent first-order scheme for the limit equation~\eqref{lim_E4-1}. Given well-prepared   initial conditions, the AP property  holds  for all subsequent  time steps $t^n$.
\end{proof}

\subsection{AA property}
The AP property guarantees only the consistency of the scheme, but in general the AP property does not guarantee the high order accuracy of SI-IMEX schemes as $\varepsilon \to 0$ and the order of accuracy may degrade. Here we now proceed to prove the AA  property of high-order SI-IMEX-RK schemes \eqref{SI-NS-Hnp1}-\eqref{SI-NS-HI} in order to maintain the accuracy as $\varepsilon \to 0$. In what follows, we recognize that the SA condition is crucial to guarantee the AA property of our SI-IMEX-RK scheme.

\begin{thm}
Consider an SI-IMEX-RK scheme  \eqref{SI-NS-Hnp1}-\eqref{SI-NS-HE}-\eqref{SI-NS-HI} of order $p$. Assume that the IMEX-RK method is SA and the initial conditions are well-prepared. Let us denote by  $U^1(\bx;\eps) = \left(h^1(\bx;\eps), \bm^1(\bx;\eps)\right)^T$ the numerical solutions obtained after one time step. Denote $U^{exa}(\bx,t) = (h^{exa}, \bm^{exa})^T$ as the exact solutions of the limit equation~\eqref{lim_E4}. Then after one step, if the errors between the numerical and exact solutions satisfies the  following estimate:
\begin{equation}
  \label{S4-AP-AA}
\lim_{\eps \rightarrow 0} U^1(\bx; \eps) = U^{exa}(\bx, \Delta t) + \mathcal{O}(\Delta t^{p+1}),
\end{equation}
then the schemes possess the  AA  property.
\end{thm}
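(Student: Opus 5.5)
The plan is to pass to the limit $\eps\to0$ stage by stage in \eqref{SI-NS-HE}--\eqref{SI-NS-HI}, identify the resulting limit scheme as a stiffly accurate RK method applied to the limit system \eqref{lim_E4}, and then read off the order from the order conditions of the implicit tableau. Throughout, each stage quantity is expanded in the Chapman--Enskog form \eqref{S4-E1}: $h^{(i)}_{E,I}=h^{(i)}_{E,I,0}+\mathcal{O}(\eps)$ and $\bm^{(i)}_{E,I}=\bm^{(i)}_{E,I,0}+\mathcal{O}(\eps)$. Well-prepared data $(h^0,\bm^0)$ are assumed, satisfying \eqref{S4-E2}.

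\textbf{Step 1 (implicit stages in the limit).} Multiply \eqref{SI-NS-HI-2} by $\eps^2$ and let $\eps\to0$. This relies on the bounded-flux reformulation, i.e.\ the stage fluxes $\mathcal{F}^{(j)}_{\bm}$ remain $\mathcal{O}(1)$, which is the content of the remark on recovering fluxes from $\bm_I^{(j)}$. Proceeding by induction on $i$, we show that the $\mathcal{O}(\eps^{-2})$ part of every previous stage flux vanishes in the limit. The $i$-th implicit momentum stage then reduces to the algebraic relation
\[
g h^{(i)}_{E,0}\nabla H^{(i)}_{I,0}=-\frac{gk^2}{(h^{(i)}_{E,0})^{\eta}}\bigl|\bm^{(i)}_{I,0}\bigr|\bm^{(i)}_{I,0},
\]
which is solved exactly as in the proof of the AP theorem (cf.\ \eqref{S4-AP-E2}). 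This gives $\bm^{(i)}_{I,0}=-\sqrt{(h_{E,0}^{(i)})^{\eta+1}/k^2}\,\nabla H^{(i)}_{I,0}/\sqrt{|\nabla H^{(i)}_{I,0}|}$. Substituting this into \eqref{SI-NS-HI-1} yields
\[
h^{(i)}_{I,0}=h^n_0+\Dt\sum_{j\le i}a_{ij}\,\nabla\cdot\Bigl(\sqrt{(h^{(j)}_{E,0})^{\eta+1}/k^2}\,\nabla H^{(j)}_{I,0}/\sqrt{|\nabla H^{(j)}_{I,0}|}\Bigr).
\]
Similarly, $h^{(i)}_{E,0}=h^n_0+\Dt\sum_{j<i}\tilde a_{ij}(\cdots)$. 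Together these form a partitioned (semi-implicit) RK scheme of order $p$ for the limit equation \eqref{lim_E4-1}, written as $h'=\mathcal{G}(h_E,h_I)$ with the $p$-Laplacian coefficient frozen at $h_E$ and the gradient taken at $h_I$.

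\textbf{Step 2 (use of SA).} Because the method is SA ($b_i=a_{si}$) and $\tilde b=b$, we have $h^{1}_0=h^{(s)}_{I,0}$, and the momentum update \eqref{SI-NS-Hnp1} gives $\bm^1_0=\bm^{(s)}_{I,0}$. The latter satisfies the limit constraint \eqref{lim_E4-2}, evaluated with $h_E^{(s)}$ in the coefficient. Hence the numerical solution remains well prepared, and the momentum sits on the limit manifold up to the discrepancy between $h^{(s)}_{E,0}$ and $h^{(s)}_{I,0}$, which is $\mathcal{O}(\Dt^{p+1})$ by the stage order conditions. This is precisely where SA is crucial. Without it, $\bm^1$ would be a combination of stage fluxes carrying $\eps^{-2}$ weights and would not satisfy \eqref{lim_E4-2}.

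\textbf{Step 3 (order).} The limit scheme from Step 1 is exactly the SI-IMEX-RK scheme of \cite{boscarino2016high} applied to the reduced autonomous problem $h'=\mathcal{G}(h,h)$. Since the double tableau satisfies the order-$p$ partitioned conditions, the standard partitioned RK theory \cite{hairer1993solving2} gives $h^1_0=h^{exa}(\Dt)+\mathcal{O}(\Dt^{p+1})$, and Step 2 transfers this error bound to $\bm$. This establishes \eqref{S4-AP-AA}.

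The main obstacle is Step 1: justifying the uniform boundedness of the stiff terms. In particular, one must show that the $\eps^{-2}$-weighted residuals in $\bm^{(i)}_*$ and $\bm_E^{(i)}$ converge, which requires the induction over stages. To handle it, I would first express $\mathcal{F}^{(j)}_{\bm}$ via the flux-recovery formula in terms of $\bm_I^{(j)}$, which is $\mathcal{O}(1)$, instead of expanding the original fluxes directly.
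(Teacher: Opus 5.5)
Your Steps 1 and 3 follow the paper's proof. You pass to the limit stage by stage, solve the algebraic friction balance for $\bm^{(i)}_{I,0}$ with $h^{(i)}_{E,0}$ in the coefficient, substitute into the $h$-stages, and use SA to identify the update with the last implicit stage, so the limit $h$-scheme is an order-$p$ SI-IMEX-RK scheme for \eqref{lim_E4-1}. On the $\eps^{-2}$ terms you are more careful than the paper, and rightly so. In the literal scheme, $\bm^{(i)}_*$ contains the earlier stage fluxes evaluated with $h^{(j)}_I$, whereas $\bm^{(j)}_I$ was balanced with $h^{(j)}_E$. Without the recovered fluxes $\mathcal{F}^{(j)}_{\bm}$, the quantity $\eps^2\bm^{(i)}_*$ keeps a nonzero limit driven by $h^{(j)}_{I,0}-h^{(j)}_{E,0}$. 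For the same reason, $\bm^{n+1}=\bm^{(s)}_I$ under SA also needs the recovered fluxes.

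The step that fails is the claim in Step 2 that $h^{(s)}_{E,0}-h^{(s)}_{I,0}=\mathcal{O}(\Dt^{p+1})$ ``by the stage order conditions''. Stage order gives nothing of the kind, since explicit stages have stage order at most one. Moreover, the explicit last stage can never coincide with the numerical solution: $\tilde a_{ss}=0$ while $b_s=a_{ss}\neq 0$ and $\tilde b=b$. Concretely, for the SA (4,4,3) tableau used in the paper, $\sum_j\tilde a_{4j}\tilde c_j\approx 0.0435$ whereas $\sum_j b_j\tilde c_j=1/2$. Hence $h^{(s)}_{E,0}=h^1_0+\mathcal{O}(\Dt^2)$ with a generically nonzero $\Dt^2$ term. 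It follows that $\bm^1_0=-\sqrt{(h^{(s)}_{E,0})^{\eta+1}/k^2}\,\nabla H^{(s)}_{I,0}/\sqrt{|\nabla H^{(s)}_{I,0}|}$ is only $\mathcal{O}(\Dt^2)$-close to the limit-manifold value at $h^1_0$, and hence to $\bm^{exa}(\Dt)$. So the transfer of the $\mathcal{O}(\Dt^{p+1})$ bound from $h$ to $\bm$ in Step 3 is unjustified. The paper's proof does not estimate $\bm$ separately at all; it simply identifies $\bm^{n+1}$ with $\bm^{(s)}_I$. To close the gap you have two options:
\begin{itemize}
\item restrict the $\mathcal{O}(\Dt^{p+1})$ claim to $h$; or
\item have the final momentum evaluated with $h^{(s)}_I$ in the coefficient, for instance by recomputing $\bm^{n+1}$ from $h^{n+1}=h^{(s)}_I$ as in \eqref{mif}, without feeding it back into $h^{n+1}$. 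Then $\bm^1_0$ lies exactly on the limit manifold at $h^1_0$, and the bound transfers, formally, by local Lipschitz continuity of $h\mapsto\bm$ where $\nabla H_0\neq0$.
\end{itemize}
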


\begin{proof}
This property can also be proven by mathematical induction for time stages. 
The strategy is as follows. From the AP property of the first order scheme, assuming that conditions \eqref{S4-E1} and \eqref{S4-E2}  hold at time step  $t^n$, we can get scheme \eqref{S4-AP-E2} and \eqref{S4-AP-E3} hold after one time step $a_{11}\Delta t$ at the first stages. Then, if we assume the same convergence holds for all $(i-1)$-th stages, we will prove it also for the $i$-th stage, that is, for  $i=1,2,\cdots,s$.

Therefore, we get: 
\begin{enumerate}
  \item  for the values $(h_E,\bm_E)^T$ as $\eps \to 0$ in~\eqref{SI-NS-HE}, they become: 
        \begin{subequations} 
          \label{S4-AP-E4}
          \begin{align}
            &h^{(i)}_{E,0} = h^n_0 - \Dt \sum_{j=1}^{i-1}\tilde{a}_{ij}\nabla \cdot \bm^{(j)}_{I,0}, \label{S4-AP-E4-1}\\[3mm]
            &\bm^{(i)}_{E,0} = \bm^n_0 - \Dt \sum_{j=1}^{i-1} \tilde{a}_{ij}\nabla\cdot\left(\frac{\bm_0\otimes\bm_0}{h_0} \right)_E,
          \end{align}
        \end{subequations}
        with
        \begin{equation}
          \label{S4-AP-E5}
          \bm^{(j)}_{I,0} = -\left(\sqrt{\frac{\left(h_{E,0}\right)^{\eta+1}}{k^2}}\frac{\nabla H_{I,0}}{\sqrt{|\nabla H_{I,0}|}}\right)^{(j)}.
        \end{equation}
        Then,  substituting Eq.~\eqref{S4-AP-E5} into Eq.~\eqref{S4-AP-E4-1} yields:
        \begin{equation}
          h_{E,0}^{(j)} = h^n_0 + \Dt\sum_{j=1}^{i-1}\tilde{a}_{ij} \nabla \cdot\left(\sqrt{\frac{\left(h_{E,0}\right)^{\eta+1}}{k^2}}\frac{\nabla H_{I,0}}{\sqrt{|\nabla H_{I,0}|}}\right)^{(j)}
        \end{equation}
  \item For the values  $(h_I,\bm_I)^T$ as $\eps \to 0$, first from \eqref{SI-NS-HI-2} we get
  $$
\left( gh_{E,0}\nabla H_{I,0}  + \frac{gk^2}{h_{E,0}^{\eta}}|\bm_{I,0}|\bm_{I,0}\right)^{(i)}=0,
  $$
  so that 
  \begin{equation}\label{m0}
            \bm_{I,0}^{(i)} = - \left(\sqrt{\frac{\left(h_{E,0}\right)^{\eta+1}}{k^2}} \frac{\nabla H_{I,0}}{\sqrt{|\nabla H_{I,0}|}} \right)^{(i)},
          \end{equation}
  and from \eqref{SI-NS-HI-1}
            \label{SI-NS-HI-AA}
            \begin{align} 
              &h^{(i)}_{I,0} = h^n_0 - \Dt \sum_{j=1}^{i}\tilde{a}_{ij}\nabla \cdot \bm^{(j)}_{I,0}, \label{SI-NS-HI-AA-1}.
            \end{align}
          Therefore, substituting \eqref{m0}, Eq.~\eqref{SI-NS-HI-AA-1} becomes 
          \begin{equation}
            h_{I,0}^{(i)} = h_{0}^{n} + \Dt \sum_{j=1}^{i} a_{ij} \nabla \cdot \left(\sqrt{\frac{\left(h_{E,0}\right)^{\eta+1}}{k^2}} \frac{\nabla H_{I,0}}{\sqrt{|\nabla H_{I,0}|}} \right)^{(j)}.
          \end{equation}
           Thus, the convergence also holds for the stage $i-$th.
  \end{enumerate}
  Assuming that the SI-IMEX-RK scheme is SA, then the numerical solution coincides with the last internal stage $s$, and then by setting $i = s$, we can update the solutions $(h^{n+1}, \bm^{n+1})^{T}$  with the final internal stage $(h^{(s)}_I, \bm^{(s)}_I)^{T}$. Furthermore, we get in the limit case $\varepsilon = 0$ a SI-IMEX-RK scheme of order $p$ for the numerical solutions of  the limit equation~\eqref{lim_E4}, that is, the SI-IMEX-RK scheme  \eqref{SI-NS-Hnp1}-\eqref{SI-NS-HE}-\eqref{SI-NS-HI} of order $p$ is AA, and the conclusion \eqref{S4-AP-AA}  is obtained.
\end{proof}

\section{Numerical tests}
\label{sec5}
\setcounter{equation}{0}
\setcounter{figure}{0}
\setcounter{table}{0}
In the previous sections, we developed a class of high-order asymptotic preserving and well-balanced numerical schemes for the shallow water equations with non-flat bottom topography and a Manning friction term \eqref{SWe_MB4}. 
In this section, we will perform  some typical  numerical  experiments  with two main  objectives: first, to assess whether the SI-IMEX temporal discretizaiton simplifies the iterative process compared to additive IMEX time discretizaiton proposed in~\cite{huang2023high};  and second,  to determine the necessity of treating  $|\bm|$ implicitly in the Manning friction term.
If implicit treatment is unnecessary, we can achieve a linear scheme, which is our desired outcome.
To address these objectives, we perform experiments using the following two  schemes.
\begin{enumerate}
   \item  Schemes~\eqref{SI-NS-Hnp1}-\eqref{SI-NS-HE}-\eqref{SI-NS-HI} denoted  as ``SI-S1'';
   \item  Schemes~\eqref{SI-NS-Hnp1}-\eqref{SI-NS-HE}-\eqref{SI-NS-HI} with a explicit treatment for $|\bm|$ in Manning friction, as shown in  Eq.~\eqref{S3-rem-1}, and denoted as ``SI-S2''.
\end{enumerate}
\textcolor{red}{Besides,  we denote the additive IMEX schemes  as ``AD-IMEX'' for simplicity . }

Without loss of generality,  both SI-S1 and SI-S2  schemes employ a third-order stiffly accurate (SA) IMEX-RK  scheme, SI-IMEX(4,4,3), for temporal discretization \cite{boscarino2022high}. The double Butcher tableau for these  schemes are  given by:
\begin{align} \label{IMEX1_(4,4,3)}
&\textrm{\bf Explicit :} \nonumber \\ \vspace{8mm}
&\begin{array}{c|cccc}
0 & 0 & 0 & 0 & 0 \\
\gamma & \gamma & 0 & 0 & 0\\
0.717933260754 & 1.243893189483& -0.525959928729 & 0 & 0\\
1 &   0.630412558153 & 0.786580740199 &  -0.416993298352& 0\\
\hline
0 & 0 & 1.208496649176& -0.644363170684 & \gamma
\end{array},\nonumber \\ \vspace{8mm}
&\textrm{\bf Implicit :}  \\ \vspace{8mm}
&\begin{array}{c|cccc}
\gamma&  \gamma & 0  & 0 & 0\\
\gamma & 0&  \gamma & 0 & 0\\
0.717933260754 &0 & 0.282066739245 & \gamma & 0\\
1 &0 & 1.208496649176& -0.644363170684 & \gamma\\
\hline
&0 & 1.208496649176& -0.644363170684 & \gamma
\end{array},\nonumber
\end{align}
with   $\gamma = 0.435866521508$.
For spatial discretization,  a fifth-order WENO  method   is used to discretize  the first-order derivative terms, while  a fourth-order central  difference scheme is applied  to  the  second-order derivative terms. 
As a result,  fully third-order schemes are obtained.
Additionally,  we consider a wide range of $\eps$  to  verify the AP and AA properties of these schemes.
The time step is defined  as $\Delta t = \text{CFL}\,\Delta x/\Lambda$, where  $\Lambda = \max \{|\bu|+\min(1,1/\eps)\sqrt{gh}\}$, with  $\text{CFL}=0.2$. 

By comparing SI-S1 scheme with the AD-IMEX scheme, we can verify whether the SI-IMEX-RK time discretization offers higher computational efficiency. Additionally, a comparison between the  SI-S1 and SI-S2 schemes will clearly demonstrate whether implicit  treatment of  the $L_2$ norm of momentum $|\bm|$ in the friction term is necessary.
Through the experiments, we can observe that the results obtained by SI-S1  scheme  match well with those  obtained by AD-IMEX scheme in both  hyperbolic and diffusive regimes,  demonstrating  that the SI-S1 scheme exhibits  AP,  AA, and  well-balanced properties.
Moreover, the SI-S1 scheme shows  higher efficiency compared to the AD-IMEX scheme, as indicated by fewer iterations required, especially in the intermediate states.
In contrast, the   SI-S2  scheme fails to simulate  the solutions as $\eps$ goes zero, indicating   that it  lacks  the   AP property.
This suggests that the nonlinearity in the schemes designed for SWEs with non-flat bottom topography and Manning friction~\eqref{SWe_MB4} arises from  the nonlinear Manning friction term, and that  implicit treatment of $|\bm|$ is indeed necessary.
\begin{exa}{
\em
\label{exam57}({\bf{1D accuracy test with linear friction}})
Firstly, we want to test the accuracy of  these  two   different schemes under  varying values of  $\eps$ in a one-dimensional region.
To satisfy the requirements for  AP and AA properties, the initial conditions selected here must adhere to   well-prepared conditions, and it is extremely challenging for us. 
Consequently, we only consider the linear situations, where $\gamma=1$,  reducing  the system~\eqref{SWe_MB4} to:
\begin{equation}
\left\{
 \begin{array}{ll}
  h_t + m_x   = 0, \\ [3mm]
  m_t +  \left(\dfrac{m^2}{h}\right)_x + \dfrac{1}{\eps^2}\left(\dfrac{1}{2}gh^2\right)_x= -\dfrac{1}{\eps^2}ghB_x - \dfrac{1}{\eps^2}m.
\end{array}\right.
\end{equation}
Next, we select  the following initial conditions for testing:  
\begin{equation}
  h(x,0) = \sin(\pi x) + 2,  \qquad m(x,0) = -2\pi\cos(\pi x)\left(\sin(\pi x) + 2 \right).
\end{equation}
Obviously, these initial conditions are well-prepared.
Here, we consider a flat bottom topology, with  $B(x)=0$,  the  gravitational constant  $g=2$,  and the  computational domain  $\Omega=[0,2]$.

To verify  the AA  property  of these schemes,  we choose three different values of $\eps: 1,10^{-2},10^{-6}$ for testing.
Setting $T=0.01$ as  finial time,  we impose  periodic boundary conditions,  using  the numerical solutions  with $N=5120$ uniform  meshes as  reference solutions, as the exact solutions are unknown.
We then calculate  the errors between the numerical solutions and the reference solutions.
The  results for  SI-S1 scheme are presented in Table~\ref{T_exam57_1}.
Since SI-S2 is equivalent to  the SI-S1 scheme in the linear case, the results for SI-S2 are  omitted.
From the results, we can see that for $\eps=1$ and $10^{-6}$, the scheme  achieves  full third-order accuracy. 
Overall,  these results demonstrate that the SI-S1 scheme  exhibits both  AP and AA properties.
\begin{table}[htbp]
  \caption{Example \ref{exam57}. The $L_{1}$ errors and orders  obtained by SI-S1 scheme  with three different values $\eps: 1,10^{-2},10^{-6}$.}
  \begin{center}
		\begin{tabular}{c|c|c|c|c|c|c|c}
			\hline\hline	
		\multicolumn{1}{c|}{\multirow{2}*{}}&\multicolumn{1}{|c|}{\multirow{2}*{N}}&\multicolumn{2}{c|}{ $1$}&\multicolumn{2}{c|}{$10^{-2}$}&\multicolumn{2}{c}{$10^{-6}$}\\
			\cline{3-8}
\multicolumn{1}{c|}{} &\multicolumn{1}{|c|}{} &error& order& error&order&error&order\\  \hline\hline
\multicolumn{1}{c|}{\multirow{6}*{$h$}}
    &40 &     3.38E-04 &       --&     7.35E-06 &       --&     7.41E-06 &     --\\ \cline{2-8}
    &80 &     3.53E-05 &     3.26&     1.13E-06 &     2.70&     1.13E-06 &     2.71\\ \cline{2-8}
   &160 &     1.76E-06 &     4.33&     1.58E-07 &     2.84&     1.56E-07 &     2.87\\ \cline{2-8}
   &320 &     7.26E-08 &     4.60&     1.99E-08 &     2.99&     1.99E-08 &     2.96\\ \cline{2-8}
   &640 &     4.79E-09 &     3.92&     2.52E-09 &     2.98&     2.54E-09 &     2.97\\ \cline{2-8}
   &1280&     4.14E-10 &     3.53&     3.17E-10 &     2.99&     3.16E-10 &     3.01\\ \cline{2-8}
   &2560&     4.42E-11 &     3.23&     3.57E-11 &     3.15&     3.53E-11 &     3.16\\ \hline\hline  
\multicolumn{1}{c|}{\multirow{6}*{$m$}}
   &40 &     3.50E-03 &       --&     2.12E-04 &      -- &     1.94E-04 &     --\\ \cline{2-8}
   &80 &     2.66E-04 &     3.71&     3.47E-05 &     2.61&     2.75E-05 &     2.82\\ \cline{2-8}
  &160 &     1.05E-05 &     4.67&     8.75E-06 &     1.99&     3.02E-06 &     3.18\\ \cline{2-8}
  &320 &     3.68E-07 &     4.83&     1.36E-07 &     2.69&     4.63E-07 &     2.71\\ \cline{2-8}
  &640 &     1.98E-08 &     4.21&     2.34E-07 &     2.53&     6.49E-08 &     2.83\\ \cline{2-8}
  &1280&     1.18E-09 &     4.07&     4.17E-08 &     2.49&     8.11E-09 &     3.00\\ \cline{2-8}
  &2560&     1.00E-10 &     3.56&     5.70E-09 &     2.87&     1.02E-10 &     2.99\\ \hline\hline
\end{tabular}
\end{center}
\label{T_exam57_1}
\end{table}

}
\end{exa}

\begin{exa}
  {\em
  \label{exam12}
  ({\bf 1D accuracy test with nonlinear friction}) 
  Secondly, we seek  to verify the accuracy of these schemes with the nonlinear Manning friction term within the one-dimensional domain.
  To achieve this, we employ the  idea of ``manufactured'' exact solutions, as discussed   in~\cite{yang2021high, huang2023high}.
  The  exact solutions are defined as:
  \begin{equation}
    h(x,t)=2 + \eps^2\sin(\pi (x-t)),\qquad m(x,t) =2 + \eps^2\sin(\pi (x-t)),
    \label{exam12_F1}
  \end{equation}
  which  satisfy the modified shallow water equations 
  \begin{equation}
    \label{ex12_1}
  \left\{
  \begin{array}{ll}
  h_t + m_x = 0,\\ [3mm]
  m_t +\left(\dfrac{m^2}{h}\right)_x + \dfrac{1}{\eps^2}\left(\dfrac{1}{2}gh^2\right)_x = -\dfrac{1}{\eps^2}gk^2\dfrac{|m|m}{h^{7/3}} + \dfrac{1}{\eps^2}gk^2\left[ 2 + \eps^2\sin(\pi(x-t)) \right]^{-1/3} \\[3mm] \hspace{3.8cm} + \dfrac{1}{\eps^2} g (2 +\eps^2\sin( \pi(x-t)))^2_x,
  \end{array}
  \right.
  \end{equation}
  Here,  we also consider a flat bottom topology with $B(x) = 0$,  a Manning coefficient of $k=1$,  a  gravitational constant of $g=1$, and a regular  computational domain $\Omega =[0,2]$.
  
  As noted,  both AP and AA properties are based on the  well-prepared initial conditions,  which  require that  as $\eps$ goes to zero, the  initial conditions satisfy the limiting equation\eqref{lim_E3} and \eqref{lim_E4}.
  \red{However, the additional source term in system \eqref{ex12_1} prevalent the $\eps$  from approaching  zero.} 
  Consequently, the experiment will fail  as $\eps$ approaches zero. To avoid this issue,  we only consider the case where  $\eps=1$.
  For this test, we set finial time to   $T=0.04$  and  compare the errors between numerical solutions and exact solutions.
  The results are showed in  Table~\ref{T_exam12_1}.
  From the table, we  observe that both schemes achieve nearly  fifth-order accuracy, attributed to   the dominance of spatial errors.
  These results confirm that  that both schemes attain the expected high-order accuracy in the compressible regime.
  \begin{table}[htbp]
    \caption{ Example \ref{exam12}. The $L_{1}$ errors and orders  for $h$ and $m$ with $\eps=1$.}
    \begin{center}
      \begin{tabular}{c|c|c|c|c|c}
        \hline\hline	
      \multicolumn{1}{c|}{\multirow{2}*{\text{Scheme}}}&\multicolumn{1}{|c|}{\multirow{2}*{N}}&\multicolumn{2}{c|}{ $h$}&\multicolumn{2}{c}{$m$}\\
        \cline{3-6}
  \multicolumn{1}{c|}{} &\multicolumn{1}{|c|}{} &error& order& error&order\\  \hline\hline
  \multicolumn{1}{c|}{\multirow{8}*{\text{{SI-S1}}}}
      &20 &     8.13E-05 &       --&     1.50E-04 &       --\\ \cline{2-6}
      &40 &     2.44E-06 &     5.06&     3.03E-06 &     5.63\\ \cline{2-6}
      &80 &     7.41E-08 &     5.04&     8.11E-08 &     5.22\\ \cline{2-6}
     &160 &     2.10E-09 &     5.14&     2.40E-09 &     5.08\\ \cline{2-6}
     &320 &     4.02E-11 &     5.71&     7.68E-11 &     4.96\\ \cline{2-6}
     &640 &     2.03E-12 &     4.31&     3.01E-12 &     4.68\\ \cline{2-6}
     &1280&     4.67E-13 &     2.12&     2.26E-13 &     3.74\\ \cline{2-6}
     &2560&     6.51E-14 &     2.84&     2.96E-14 &     2.93\\\hline\hline
    \multicolumn{1}{c|}{\multirow{8}*{\text{{SI-S2}}}}
    &20 &     8.13E-05 &       --&     1.50E-04 &       --\\ \cline{2-6}
    &40 &     2.44E-06 &     5.06&     3.03E-06 &     5.63\\ \cline{2-6}
    &80 &     7.41E-08 &     5.04&     8.11E-08 &     5.22\\ \cline{2-6}
   &160 &     2.10E-09 &     5.14&     2.40E-09 &     5.08\\ \cline{2-6}
   &320 &     4.02E-11 &     5.71&     7.68E-11 &     4.96\\ \cline{2-6}
   &640 &     2.03E-12 &     4.31&     3.01E-12 &     4.68\\ \cline{2-6}
   &1280&     4.67E-13 &     2.12&     2.26E-13 &     3.74\\ \cline{2-6}
   &2560&     6.51E-14 &     2.84&     2.96E-14 &     2.93\\ \hline\hline
  \end{tabular}
  \end{center}
  \label{T_exam12_1}
  \end{table}  

  }\end{exa}

\begin{exa}{
\em
\label{exam55}
In this example, we aim to verify the asymptotic preserving  property of these two  schemes by  considering  both smooth and discontinuous initial conditions, as dissussed in  \cite{bulteau2020fully,huang2023high}.
The smooth initial conditions are defined as:
\begin{equation}
h(x,0) = \left\{
            \begin{aligned}
				&2  ,   & x< -1;\\
                &\frac{1}{2}\left[3+ \sin\left(\frac{3\pi x}{2}\right)\right], & -1\le x < 1;\\
				&1,   & \text{otherwise};
			\end{aligned}
			\right.
\qquad m(x,0) = 0.
\label{exam55_F1}
\end{equation}
The discontinuous initial conditions are given by:
\begin{equation}
h(x,0) = \left\{
            \begin{aligned}
				&2  ,   & x\textless 0;\\
				&1,   & \text{otherwise};
			\end{aligned}
			\right.
\qquad m(x,0) = 0.
\label{exam55_F2}
\end{equation}
Both sets of initial conditions are  tested  within  the domain $[-5,5]$ with  inflow and out outflow boundary conditions.

As described earlier,  stiff behaviors are governed by either  long-time simulations or the domination of friction.
To investigate this,  we  conduct the  following two tests:
\begin{enumerate}
  \item First, we fix the  final time  at $T=0.01$ and set the  coefficient of the friction term to $gk^2=1.0$,  where   $g=9.812$ representing  the gravitational constant, and  select  four different values for  $\eps = 1,\,0.2,\,0.1,\,0.0005$;
  \item Second, we fix the $\eps=1$,  set the final time to $T=0.01\theta$, and adjust the  coefficient of the friction term to  $gk^2=\theta^2$ with $\theta=1,\,5,\,10,\,50$.
\end{enumerate}
We also solve   the limiting equations \eqref{lim_E4} using   the  implicit RK scheme of  SI-IMEX (\ref{IMEX1_(4,4,3)}),  denoted as ``lim''.
To facilitate  effective comparison, we perform  numerical experiments with  two different grid subdivisions, $N=200$ and $N=400$.

The results obtained by the  SI-S1 scheme  are presented in  Fig.~\ref{Fig_ex55_1},  showing  that  as  $\eps$ decreases or the simulation time increases, the numerical solutions converge towards those  obtained from the  limiting equations, indicating that the SI-S1 scheme has the asymptotic preserving property.
Next, we compare the results obtained by these two  different schemes, using the  results from the  AD-IMEX scheme as references.
To conserve space, we focus on two specific cases: $\eps = 0.0005,\,T=0.01,\,gk^2=1$ and $\eps=1,\,T=0.5,\,gk^2=2500$, as shown in Fig.~\ref{Fig_ex55_2} and Fig.~\ref{Fig_ex55_3}.
From these results, we can see that as $\eps$ becomes small, i.e. $\eps=0.0005$, the results obtained with the  SI-S1 scheme  closely match  those from  AD-IMEX scheme. 
However,   the solutions obtained using  SI-S2 scheme exhibit significant oscilations, indicating a failure to    converge to the reference solution. 
This demonstrates that SI-S1 scheme has  the AP property, whereas the  SI-S2 scheme  does not.

Finally, we  compare the efficiency of SI-S1 and AD-IMEX schemes by evaluating  the number of iterations required for  different cases. 
The results, presented in Table~\ref{T_ex55_1}, show that in the hyperbolic regime, the SI-S1 scheme requires a similar number of iterations as the AD-IMEX scheme, 
However, for the intermediate and the diffusive limit, the SI-S1 scheme requires fewer iterations, indicating  higher computational efficiency, particularly in the  discontinuous situation.
All these results demonstrate that the SI-S1 scheme is more  efficient than the AD-IMEX scheme.
\begin{figure}[hbtp]
  \begin{center}
  \mbox{
      \subfigure[water depth $h$]
  {\includegraphics[width=7.0cm]{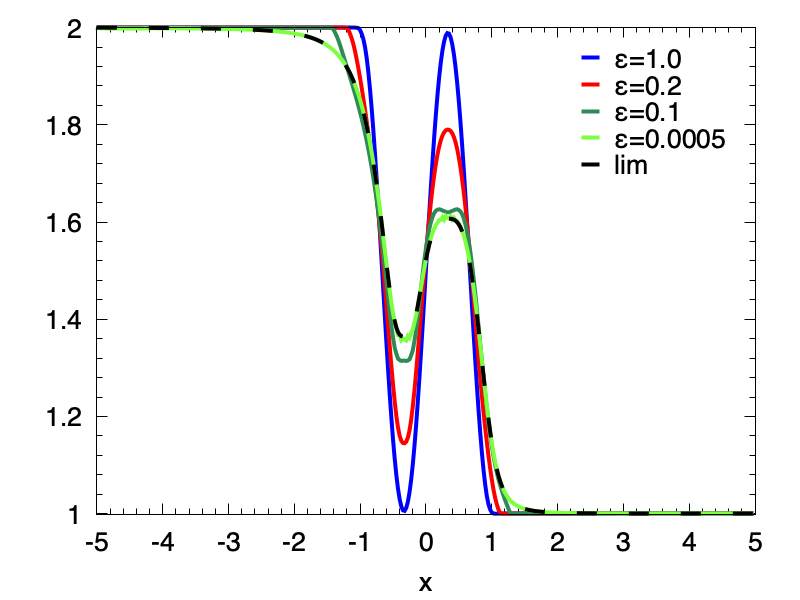}}\quad
      \subfigure[momentum $m$]
  {\includegraphics[width=7.0cm]{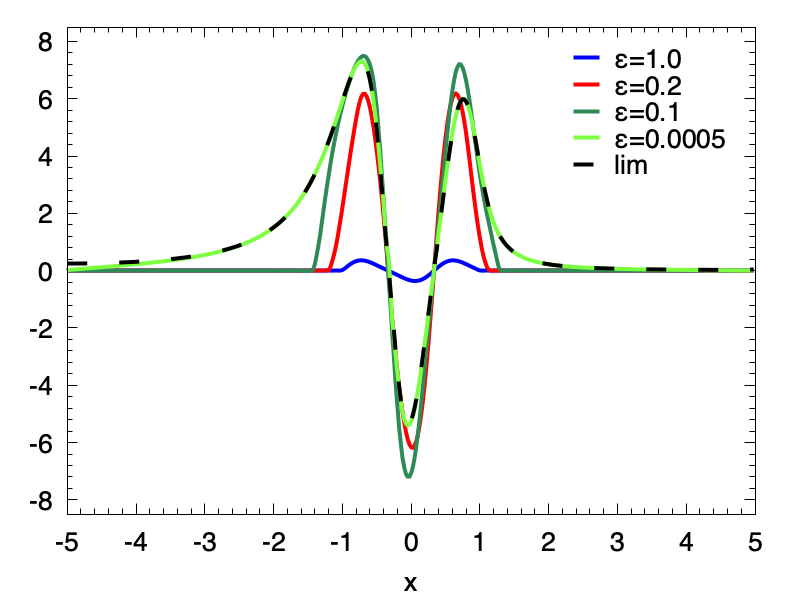}}\quad
  }
  \mbox{
      \subfigure[water depth $h$]
  {\includegraphics[width=7.0cm]{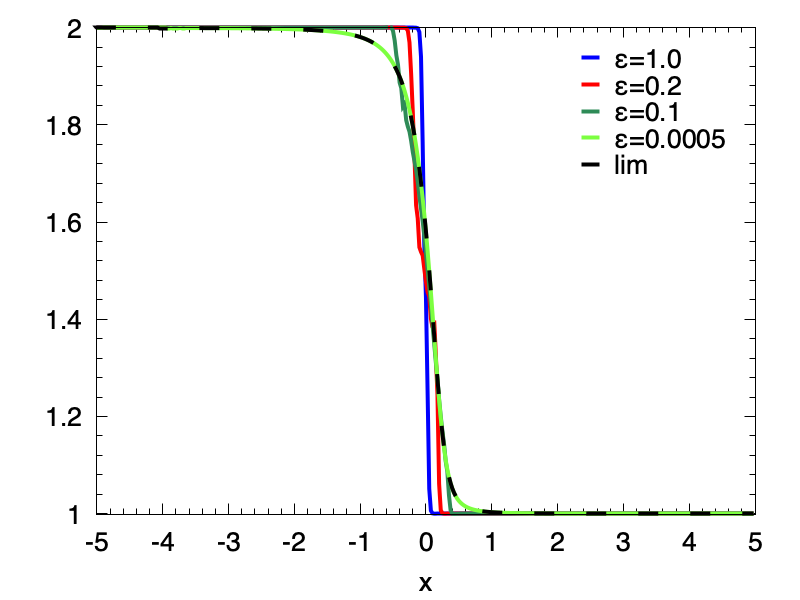}}\quad
      \subfigure[momentum $m$]
  {\includegraphics[width=7.0cm]{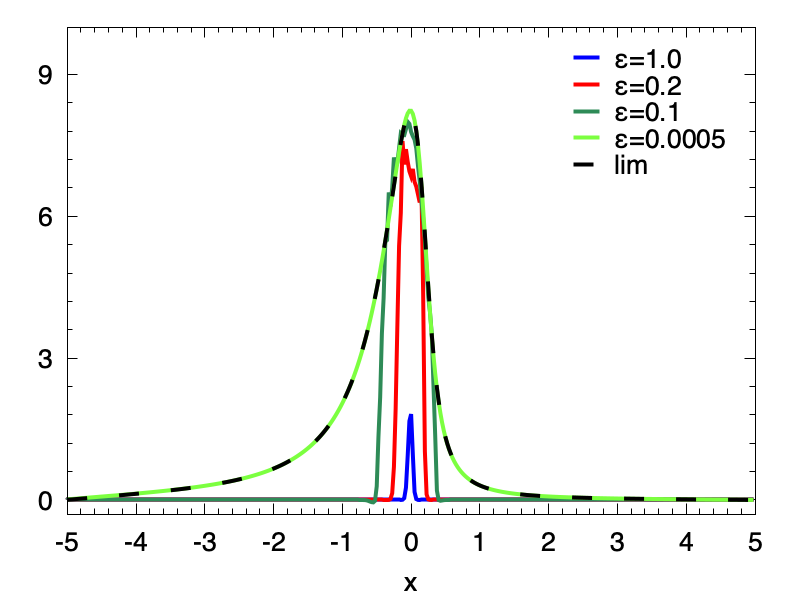}}\quad
  }
  \caption{ Example~\ref{exam55}, the numerical results obtained by SI-S1 for  the water depth  $h$ (left) and the momentum $m$ (right) for the smooth initial conditions \eqref{exam55_F1} and the discontinuous initial conditions \eqref{exam55_F2} with different $\eps$. Top: smooth; Bottom: discontinuous ($N=400$ uniform meshes).}
  \label{Fig_ex55_1}
  \end{center}
  \end{figure}

  \begin{figure}[hbtp]
    \begin{center}
    \mbox{
        \subfigure[ $\eps=0.0005,\, T=0.01,\, gk^2=1$]
    {\includegraphics[width=7.0cm]{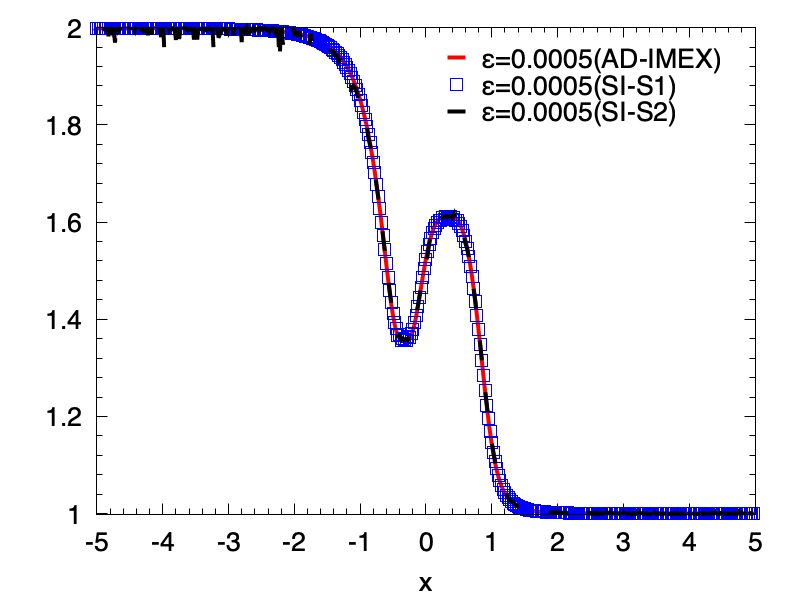}}\quad
        \subfigure[ $\eps=0.0005, T=0.01$]
    {\includegraphics[width=7.0cm]{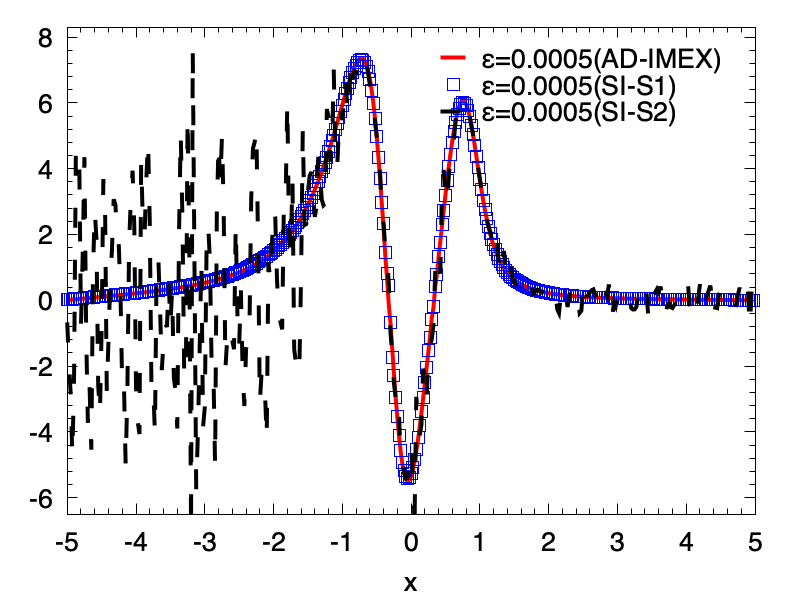}}\quad
    }
    \mbox{
        \subfigure[ $\eps=1,\, T=0.5,\,gk^2=2500$]
    {\includegraphics[width=7.0cm]{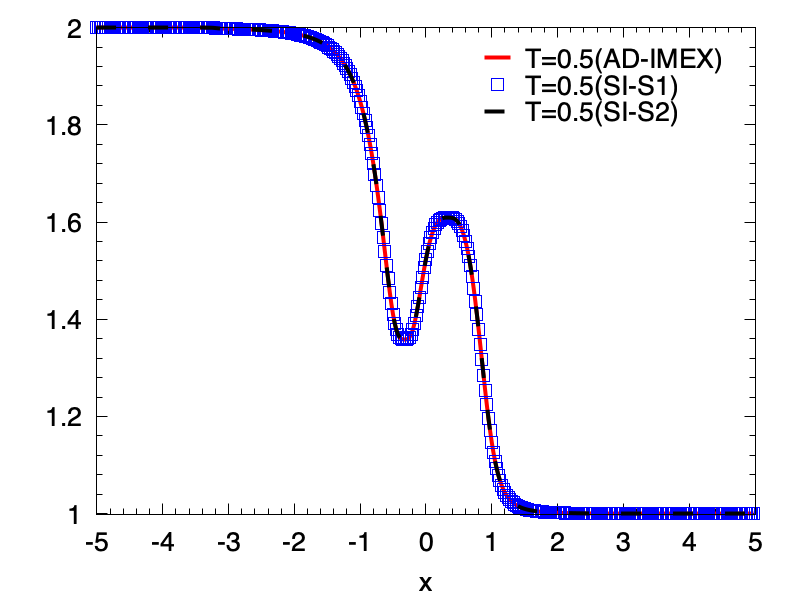}}\quad
        \subfigure[ $\eps=1,\, T=0.5,\,gk^2=2500$]
    {\includegraphics[width=7.0cm]{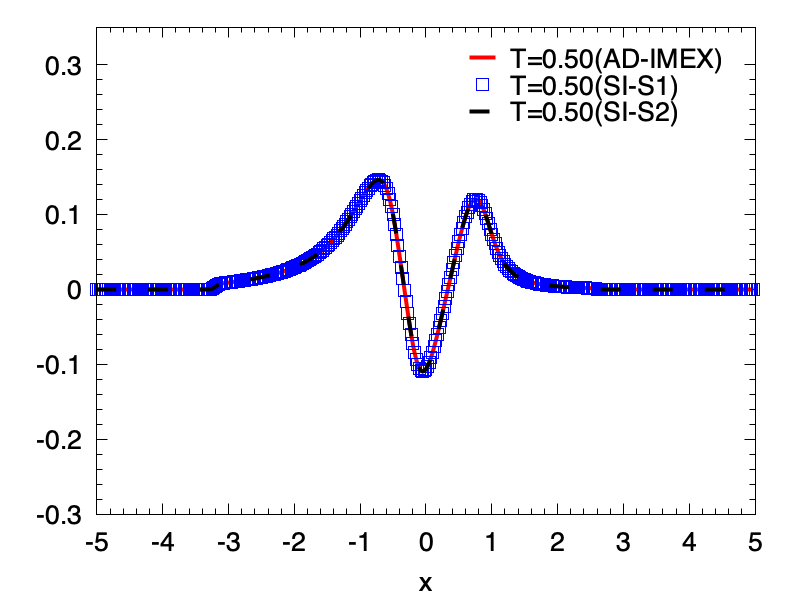}}\quad
    }
    \caption{ Example~\ref{exam55}, the numerical results about the water depth  $h$ (left) and momentum $m$ (right) for the smooth initial conditions \eqref{exam55_F1} with $N=400$ uniform meshes.}
    \label{Fig_ex55_2}
    \end{center}
    \end{figure}

    \begin{figure}[hbtp]
      \begin{center}
      \mbox{
          \subfigure[water depth $h$]
      {\includegraphics[width=7.0cm]{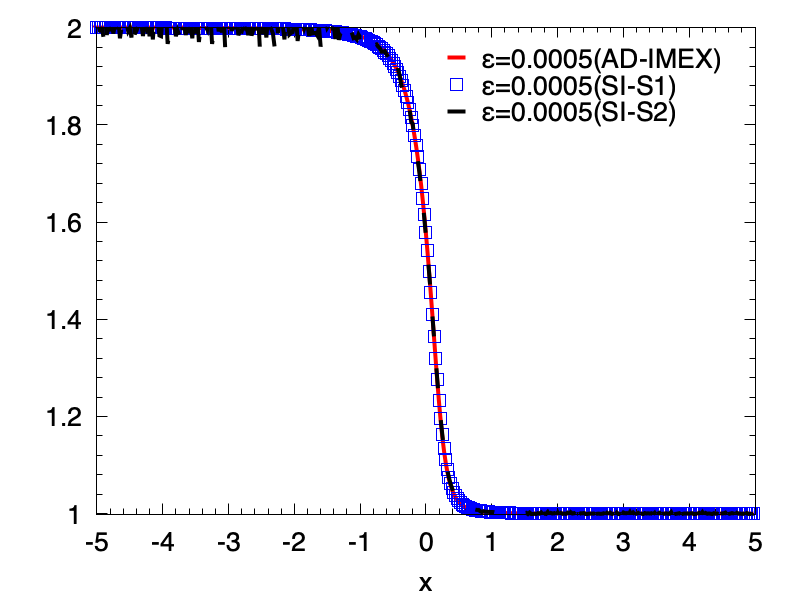}}\quad
          \subfigure[water depth $h$]
      {\includegraphics[width=7.0cm]{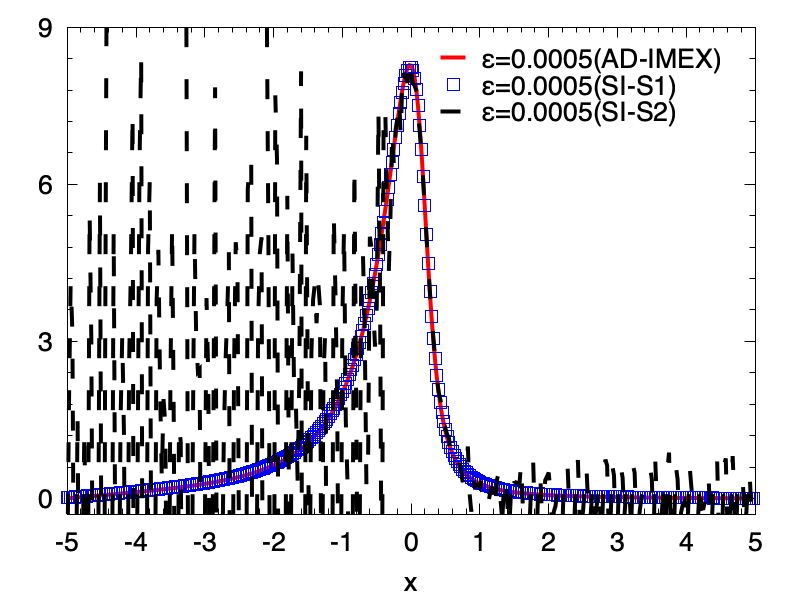}}\quad
      }
      \mbox{
          \subfigure[water depth $h$]
      {\includegraphics[width=7.0cm]{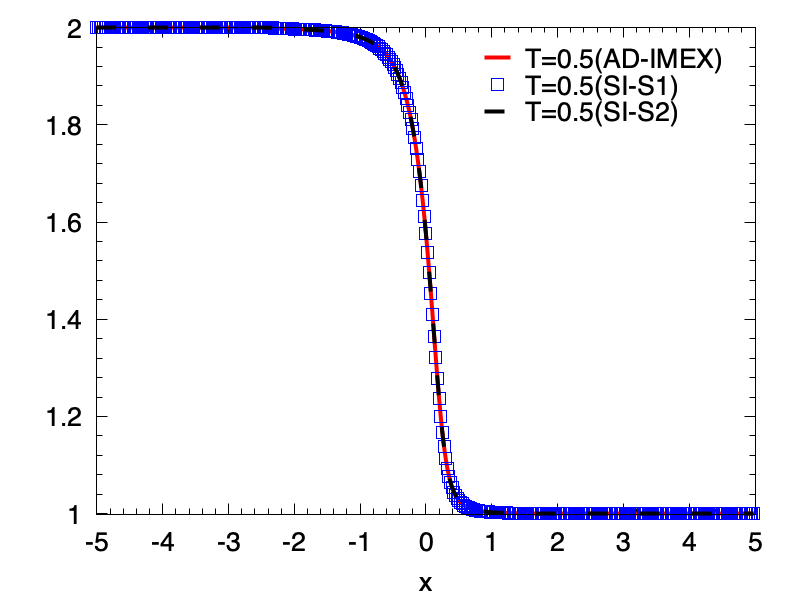}}\quad
          \subfigure[water depth $h$]
      {\includegraphics[width=7.0cm]{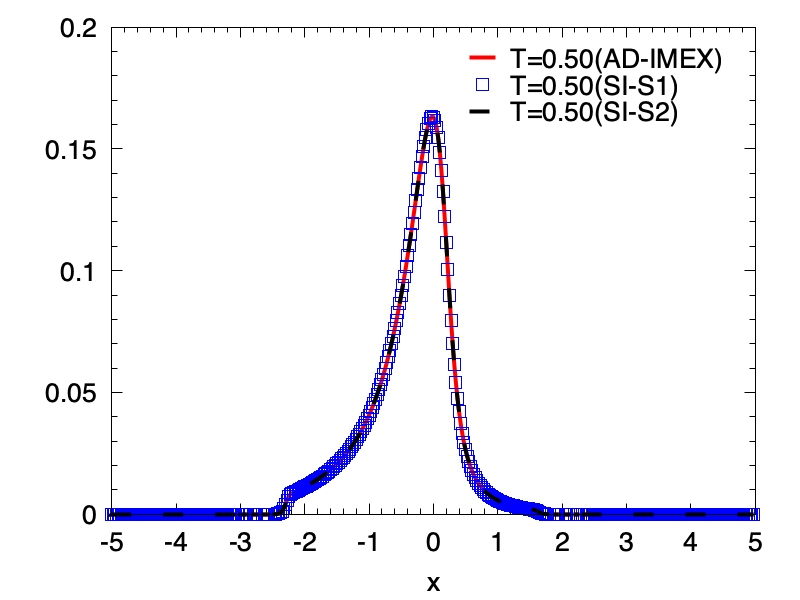}}\quad
      }
      \caption{ Example~\ref{exam55}, the numerical results about the water depth  $h$ (left)  and the momentum $m$ (right) for the discontinuous initial conditions \eqref{exam55_F2} with $N=400$ uniform meshes.}
      \label{Fig_ex55_3}
      \end{center}
      \end{figure}

      \begin{table}[htbp]
        \caption{ Example \ref{exam55}. The iteration numbers for two types of initial conditions (Unit: iterations).}
        \begin{center}
          \begin{tabular}{c|c|c|c|c|c|c}
            \hline\hline	
          \multicolumn{1}{c|}{\multirow{2}*{\text{Initial condition }}}&\multicolumn{2}{|c|}{\multirow{2}*{Case}}&\multicolumn{2}{c|}{ $N=200$}&\multicolumn{2}{c}{$N=400$}\\
            \cline{4-7}
      \multicolumn{1}{c|}{} &\multicolumn{1}{|c}{} & &AD-IMEX& SI-S1& AD-IMEX&SI-S1\\  \hline\hline
      \multicolumn{1}{c|}{\multirow{8}*{\text{{Continue ~\eqref{exam55_F1}}}}} &\multicolumn{1}{c|}{\multirow{4}*{\text{{$\theta=1$}}}}
          &$\eps=1.00$  &     41&       40&     72&       72\\ \cline{3-7}
        & &$\eps=0.20$  &     8902&     80&     14592&    125\\ \cline{3-7}
        & &$\eps=0.10$  &     11836&    130&    20948&    204\\ \cline{3-7}
        & &$\eps=0.0005$&     1230&     2499&   1995&     1216\\ \cline{2-7}
        & \multicolumn{1}{c|}{\multirow{3}*{\text{{$\eps=1$}}}}
          &$\theta=5$    &     185&      197&    368&      368\\ \cline{3-7}
        & &$\theta=10$   &     369&      409&    728&      728\\ \cline{3-7}
        & &$\theta=50$  &     1777&     2527&   3552&     3578\\ \cline{3-7}
         \hline\hline
        \multicolumn{1}{c|}{\multirow{8}*{\text{{Discontinuous~\eqref{exam55_F2}}}}} &\multicolumn{1}{c|}{\multirow{4}*{\text{{$\theta=1$}}}}
          &$\eps=1.00$  &     47&       40&     81&       80\\ \cline{3-7}
        & &$\eps=0.20$  &     8772&     113&    19616&    229\\ \cline{3-7}
        & &$\eps=0.10$  &     8121&     175&    16557&    327\\ \cline{3-7}
        & &$\eps=0.0005$&     3451&     891&    3387&     729\\ \cline{2-7}
        &\multicolumn{1}{c|}{\multirow{4}*{\text{{$\eps=1$}}}}
          &$\theta=5$   &     206&      288&    393&      458\\ \cline{3-7}
        & &$\theta=10$  &     383&      477&    753&      851\\ \cline{3-7}
        & &$\theta=50$&    1780&     2222&   3552&      3751\\ \cline{3-7}
         \hline\hline
      \end{tabular}
      \end{center}
      \label{T_ex55_1}
      \end{table}

}\end{exa}

\begin{exa}{
  \em
  \label{2D_linear_test}({\bf{2D accuracy test with linear friction}})
  We  now proceed  to test the order of accuracy for these two  schemes using  a 2D smooth problem. As mentioned  earlier, achieving the asymptotically accurate (AA) property requires well-prepared  initial conditions. So we will linearize the system by setting $\gamma=1$, which transforms  system~\eqref{SWe_MB4} into the following form:
  \begin{equation}
    \left\{
     \begin{array}{ll}
      \partial_t h + \partial_x m_1 + \partial_y m_2   = 0, \\ [3mm]
      \partial_t m_1 + \partial_x\left(\frac{m_1^2}{h}\right) + \partial_y\left(\frac{m_1m_2}{h}\right)+ \frac{1}{\eps^2}g\partial_x\left(\frac{1}{2}h^2\right)
      = -\frac{1}{\eps^2}gh\partial_x B - \frac{1}{\eps^2}m_1,\\[3mm]
      \partial_t m_2 + \partial_x(\frac{m_1m_2}{h}) + \partial_y\left(\frac{m_2^2}{h} \right) + \frac{1}{\eps^2}g\partial_y\left(\frac{1}{2}h^2\right)
      = -\frac{1}{\eps^2}gh\partial_y B - \frac{1}{\eps^2}m_2.
    \end{array}\right.
  \end{equation}
  We choose the following well-prepared initial condittions:
  \begin{equation}
    \left\{
    \begin{aligned}
      &   h(x,y,0) = \sin(\pi (x+y)) + 2, \\
      & m_1(x,y,0) = -2\pi\cos(\pi (x+y))\left(\sin(\pi (x+y)) + 2 \right),\\
      & m_2(x,y,0) = -2\pi\cos(\pi (x+y))\left(\sin(\pi (x+y)) + 2 \right).
    \end{aligned}
    \right.
\end{equation}
A flat bottom topology,  $B(x,y) = 0$, is  considered within  a  square domain $\Omega=[0,2]^2$,  with  the gravitational constant is  set to $g=2$.
The  final time is set  at $T=0.01$, and  we employ periodic boundary conditions, then divide the computational domain   into $N_k^2$ uniform meshes,  where $N_k = N_0\cdot 2^k$ with $N_0=8$ and  $k=0,1,2,3,4,5$. 
Without loss of generality, we select  three different values of $\eps$ for testing, i.e., $\eps=1,\,10^{-2},\,10^{-6}$, comparing the errors between numerical solutions and the reference solutions  obtained using   $512^2$ uniform meshes.
For the linear case, the SI-S2 scheme is equivalent to the  SI-S1 scheme, thus,   we  present only   the results obtained by the SI-S1 scheme in Table~\ref{T_2D_linear_test_1}, omitting  those from the SI-S2 scheme.

From the  table, it is observed that for  both $\eps=1$ and $\eps=10^{-6}$, the SI-S1 scheme   achieve overall third order accuracy, indicating its effectiveness  in both hyperbolic and diffusive limit.
However, for the intermediate state, i.e., $\eps=10^{-2}$,  order reductions are observed, attributed  by $\eps\sim\Delta t$ on  relatively coarse meshes.
This phenomenon  has also been reported  in \cite{boscarino2019high,boscarino2022high,huang2022high,huang2023high,huang2024FEhigh}.
\begin{table}[htbp]
    \caption{ Example \ref{2D_linear_test}. The $L_{1}$ errors and orders  for $h$, $m_1$ and $m_2$ with SI-S1 scheme.}
    \begin{center}
      \begin{tabular}{c|c|c|c|c|c|c|c}
        \hline\hline	
      \multicolumn{1}{c|}{\multirow{2}*{}}&\multicolumn{1}{|c|}{\multirow{2}*{N}}&\multicolumn{2}{c|}{ $1$}&\multicolumn{2}{c|}{$10^{-2}$}&\multicolumn{2}{c}{$10^{-6}$}\\
        \cline{3-8}
      \multicolumn{1}{c|}{} &\multicolumn{1}{|c|}{} &error& order& error&order& error&order\\  \hline\hline
      \multicolumn{1}{c|}{\multirow{5}*{$h$}}
  &  8 &     1.67E-02 &      -- &        1.18E-02 &       --&     1.19E-02 &       --\\ \cline{2-8}
  & 16 &     5.22E-03 &     1.68&        2.82E-04 &     5.39&     2.76E-04 &     5.43\\ \cline{2-8}
  & 32 &     4.89E-04 &     3.42&        1.14E-05 &     4.63&     1.15E-05 &     4.58\\ \cline{2-8}
  & 64 &     2.15E-05 &     4.51&        1.12E-06 &     3.34&     1.17E-06 &     3.30\\ \cline{2-8}
  &128 &     1.40E-06 &     3.94&        1.60E-07 &     2.81&     1.63E-07 &     2.84\\ \cline{2-8}
  &256 &     1.05E-07 &     3.74&        1.96E-08 &     3.03&     1.87E-08 &     3.12\\ \hline\hline
  \multicolumn{1}{c|}{\multirow{5}*{$hu$}}
  &  8 &     3.46E-01 &      -- &     5.50E-02 &       --&     5.50E-02 &     --\\ \cline{2-8}
  & 16 &     3.34E-02 &     3.37&     5.17E-03 &     3.41&     4.65E-03 &     3.56\\ \cline{2-8}
  & 32 &     3.13E-03 &     3.42&     2.45E-04 &     4.40&     2.19E-04 &     4.41\\ \cline{2-8}
  & 64 &     1.38E-04 &     4.50&     3.76E-05 &     2.70&     2.19E-05 &     3.32\\ \cline{2-8}
  &128 &     1.11E-05 &     3.64&     7.90E-06 &     2.25&     3.99E-06 &     2.46\\ \cline{2-8}
  &256 &     9.49E-07 &     3.54&     1.63E-06 &     2.28&     3.39E-07 &     3.56\\ \hline\hline
  \multicolumn{1}{c|}{\multirow{5}*{$hv$}}
  &  8 &     2.90E-01 &      -- &      5.50E-02 &       --&     5.50E-02 &     --\\ \cline{2-8}
  & 16 &     3.62E-02 &     3.00&      5.13E-03 &     3.42&     4.55E-03 &     3.56\\ \cline{2-8}
  & 32 &     3.43E-03 &     3.40&      2.43E-04 &     4.40&     2.19E-04 &     4.41\\ \cline{2-8}
  & 64 &     1.72E-04 &     4.32&      3.76E-05 &     2.69&     2.19E-05 &     3.32\\ \cline{2-8}
  &128 &     1.22E-05 &     3.81&      7.90E-06 &     2.25&     3.99E-06 &     2.46\\ \cline{2-8}
  &256 &     9.64E-07 &     3.67&      1.63E-06 &     2.28&     3.39E-07 &     3.56\\ \hline\hline
\end{tabular}
\end{center}
\label{T_2D_linear_test_1}
\end{table}

}
\end{exa}

\begin{exa}
{\em 
  \label{2D_Vortex}
  ({\bf{2D accuracy test with nonlinear friction}})
  In this example, we aim to test the accuracy of the  two schemes in  a nonlinear  two-dimensional setting. 
  The well-prepared initial conditions are crucial  for achieving the AA property,  while constructing  such   conditions is extremely   challenging.
  Therefore, we  consider  sufficiently smooth but not well-prepared initial conditions, given by:
  \begin{equation}
    h(\bx,0) = H_0 - \frac{\eps^2}{2}e^{-(r^2-1)},
  \end{equation}
  where $H_0=1$ and the gravitational constant $g=1$.
  Polar coordinates $(\alpha,r)$  are defined as 
  \begin{equation}
    \tan(\alpha)=\frac{y}{x}, \quad r^2 = x^2 + y^2,
  \end{equation}
  with  the corresponding  angular velocity $m_{\alpha} = re^{-\frac{1}{2}(r^2-1)}$.
  This  example is considered in a square domain $[-5,5]\times[-5,5]$ with  a small Manning friction  $k=0.001$ and  periodic boundary conditions .
  
  Given that   the initial conditions are not well-prepared,  we limit our  testing to  cases where $\eps$ is not too small, such as  $\eps = 0.32,\,10^{-2},\,10^{-3}$ .
  The final time is set to  $T=0.1$, and  we use   the  uniform  of $512 \times 512$ subdivision as the reference solutions, we then select   six different $N_k^2$ uniform subdivisions, where $N_k = N_0 \cdot 2^k$ with  $N_k=8$ and $k=0,\,1,\,3,\,4,\,5$.
  Calculating   the errors between the numerical solutions  obtained by SI-S1 and SI-S2 schemes  and the reference solutions, and  showing them   in Table~\ref{2D_Vortex_T1} and Table~\ref{2D_Vortex_T3}, respectively.
  These tables reveal that the SI-S1 scheme achieves the expected high-order accuracy when $\eps$ is not too small and is also not proportional to the time step $\Dt$.
  The  SI-S2 scheme attains the expected accuracy for $\eps=0.32$ and $10^{-2}$, but fails to achieve the high order accuracy when $\eps=10^{-3}$.
  This   indicates that  the SI-S1 scheme performs better than the  SI-S2 as $\eps$  approaches zero. 
  \begin{table}[htbp]
    \caption{ Example \ref{2D_Vortex}. The $L_{1}$ errors and orders  for $h$, $m_1$ and $m_2$ with SI-S1 scheme.}
    \begin{center}
      \begin{tabular}{c|c|c|c|c|c|c|c}
        \hline\hline	
      \multicolumn{1}{c|}{\multirow{2}*{}}&\multicolumn{1}{|c|}{\multirow{2}*{N}}&\multicolumn{2}{c|}{ $0.32$}&\multicolumn{2}{c|}{$10^{-2}$}&\multicolumn{2}{c}{$10^{-3}$}\\
        \cline{3-8}
      \multicolumn{1}{c|}{} &\multicolumn{1}{|c|}{} &error& order& error&order& error&order\\  \hline\hline
      \multicolumn{1}{c|}{\multirow{5}*{$h$}}
  &  8 &     9.69E-04 &      -- &        2.47E-04 &       --&     2.01E-04 &       --\\ \cline{2-8}
  & 16 &     3.05E-04 &     1.67&        2.08E-05 &     3.57&     1.37E-05 &     3.87\\ \cline{2-8}
  & 32 &     3.52E-05 &     3.11&        2.83E-06 &     2.88&     2.49E-06 &     2.46\\ \cline{2-8}
  & 64 &     1.60E-06 &     4.46&        8.50E-08 &     5.06&     5.07E-08 &     5.62\\ \cline{2-8}
  &128 &     6.72E-08 &     4.58&        3.76E-09 &     4.50&     3.36E-09 &     3.91\\ \cline{2-8}
  &256 &     4.55E-09 &     3.88&        3.45E-10 &     3.44&     3.61E-09 &      -- \\ \hline\hline
  \multicolumn{1}{c|}{\multirow{5}*{$m_1$}}
  &  8 &     6.70E-03 &      -- &     9.17E-03 &       --&     2.05E-02 &     --\\ \cline{2-8}
  & 16 &     1.17E-03 &     2.51&     1.80E-03 &     2.35&     1.40E-03 &     3.87\\ \cline{2-8}
  & 32 &     1.42E-04 &     3.05&     2.04E-04 &     3.14&     2.24E-04 &     2.65\\ \cline{2-8}
  & 64 &     9.24E-06 &     3.94&     1.02E-05 &     4.33&     7.59E-06 &     4.88\\ \cline{2-8}
  &128 &     4.98E-07 &     4.21&     6.19E-07 &     4.04&     4.51E-07 &     4.07\\ \cline{2-8}
  &256 &     4.52E-08 &     3.46&     7.32E-08 &     3.08&     1.48E-07 &     1.60\\ \hline\hline
  \multicolumn{1}{c|}{\multirow{5}*{$m_2$}}
  &  8 &     1.01E-02 &      -- &      1.18E-02 &       --&     2.21E-02 &     --\\ \cline{2-8}
  & 16 &     1.70E-03 &     2.57&      2.26E-03 &     2.38&     1.79E-03 &     3.62\\ \cline{2-8}
  & 32 &     2.32E-04 &     2.87&      2.63E-04 &     3.10&     2.73E-04 &     2.71\\ \cline{2-8}
  & 64 &     1.27E-05 &     4.19&      1.39E-05 &     4.24&     1.25E-05 &     4.46\\ \cline{2-8}
  &128 &     5.55E-07 &     4.51&      6.70E-07 &     4.38&     6.49E-07 &     4.26\\ \cline{2-8}
  &256 &     5.29E-08 &     3.39&      7.96E-08 &     3.07&     1.58E-07 &     2.04\\ \hline\hline
  \end{tabular}
  \end{center}
  \label{2D_Vortex_T1}
  \end{table}

  \begin{table}[htbp]
    \caption{ Example \ref{2D_Vortex}. The $L_{1}$ errors and orders  for $h$, $m_1$ and $m_2$ with SI-S2 scheme.}
    \begin{center}
      \begin{tabular}{c|c|c|c|c|c|c|c}
        \hline\hline	
      \multicolumn{1}{c|}{\multirow{2}*{}}&\multicolumn{1}{|c|}{\multirow{2}*{N}}&\multicolumn{2}{c|}{ $0.32$}&\multicolumn{2}{c|}{$10^{-2}$}&\multicolumn{2}{c}{$10^{-3}$}\\
        \cline{3-8}
      \multicolumn{1}{c|}{} &\multicolumn{1}{|c|}{} &error& order& error&order& error&order\\  \hline\hline
      \multicolumn{1}{c|}{\multirow{5}*{$h$}}
  &  8 &     9.69E-04 &      -- &        2.47E-04 &       --&     2.53E-04 &       --\\ \cline{2-8}
  & 16 &     3.05E-04 &     1.67&        2.08E-05 &     3.57&     8.30E-04 &      --\\ \cline{2-8}
  & 32 &     3.52E-05 &     3.11&        2.83E-06 &     2.88&     4.18E-04 &     0.99\\ \cline{2-8}
  & 64 &     1.60E-06 &     4.46&        8.50E-08 &     5.06&     1.03E-04 &     2.02\\ \cline{2-8}
  &128 &     6.72E-08 &     4.58&        3.76E-09 &     4.50&     2.32E-05 &     2.15\\ \cline{2-8}
  &256 &     4.55E-09 &     3.88&        3.45E-10 &     3.44&     2.40E-06 &     3.28\\ \hline\hline
  \multicolumn{1}{c|}{\multirow{5}*{$m_1$}}
  &  8 &     6.70E-03 &      -- &     9.17E-03 &       --&     1.77E-02 &     --\\ \cline{2-8}
  & 16 &     1.17E-03 &     2.51&     1.80E-03 &     2.35&     6.38E-02 &     --\\ \cline{2-8}
  & 32 &     1.42E-04 &     3.05&     2.04E-04 &     3.14&     2.37E-02 &     1.43\\ \cline{2-8}
  & 64 &     9.24E-06 &     3.94&     1.02E-05 &     4.33&     7.15E-03 &     1.73\\ \cline{2-8}
  &128 &     4.98E-07 &     4.21&     6.19E-07 &     4.04&     2.47E-03 &     1.54\\ \cline{2-8}
  &256 &     4.52E-08 &     3.46&     7.32E-08 &     3.08&     1.01E-03 &     1.29\\ \hline\hline
  \multicolumn{1}{c|}{\multirow{5}*{$m_2$}}
  &  8 &     1.01E-02 &      -- &      1.18E-02 &       --&     1.95E-02 &     --\\ \cline{2-8}
  & 16 &     1.70E-03 &     2.51&      2.26E-03 &     2.38&     6.29E-02 &     --\\ \cline{2-8}
  & 32 &     2.32E-04 &     2.87&      2.63E-04 &     3.10&     2.11E-02 &     1.58\\ \cline{2-8}
  & 64 &     1.27E-05 &     4.19&      1.39E-05 &     4.24&     7.87E-03 &     1.42\\ \cline{2-8}
  &128 &     5.55E-07 &     4.51&      6.70E-07 &     4.38&     2.45E-03 &     1.68\\ \cline{2-8}
  &256 &     5.29E-08 &     3.39&      7.96E-08 &     3.07&     9.99E-04 &     1.30\\ \hline\hline
  \end{tabular}
  \end{center}
  \label{2D_Vortex_T3}
  \end{table}

}\end{exa}

\begin{exa}
{\em
\label{exam2D_2}
({\bf {A small perturbation of 2D steady-state water}})
In this example, we aim to  evaluate  the performance of these two  schemes in a 2D setting for capturing the propagation of small perturbations near the equilibrium state, as studied in \cite{yang2021high,huang2023high}.
The test involves an isolated elliptical-shaped hump in the  bottom topography, defined as: 
    \begin{equation}
    \label{Ex8_1}
    B(x,y) = 0.8e^{-5(x-0.9)^2-50(y-0.5)^2},
    \end{equation}
and the initial conditions are specified as 
\begin{subequations}
    \begin{equation}
    \label{Ex8_2}
    h(x,y,0)+B(x,y) = \left\{
    \begin{aligned}
    & 1+0.01,  &\text{if}\quad 0.05  \le x \le 0.15; \\
    & 1,       &\text{otherwise}.
    \end{aligned}
    \right.
    \end{equation}
    \begin{equation}
    \label{Ex8_3}
    m_1 = m_2 = 0,
    \end{equation}
\end{subequations}
These conditions are  considered within  rectangular domain $[0,2]\times[0,1]$, with inflow and   outflow boundary conditions in the x-direction and periodic boundary conditions in the y-direction.
A small friction term is also added, with a Manning coefficient of  $k=0.09$.
The numerical results for the  surface level $H=h+B$  are shown in Fig.~\ref{Fig_ex2D_2_1} using    grid subdivisions of $200\times100$.
From the results, we can see that the  initial perturbation is separated into two waves propagating to the left and right.
As the left-propagating wave exits  the domain, the right-propagating wave interacts with the non-flat bottom topography, and is well captured by these two  schemes.
The numerical results obtained from  these two  schemes are consistent with  those  presented in \cite{yang2021high,huang2023high}, demonstrating  that both schemes possess  the well-balanced property.
\begin{figure}[hbtp]
    \begin{center}
        \mbox{ \subfigure[surface level at $t=0.12$]					
             {\includegraphics[width=7cm]{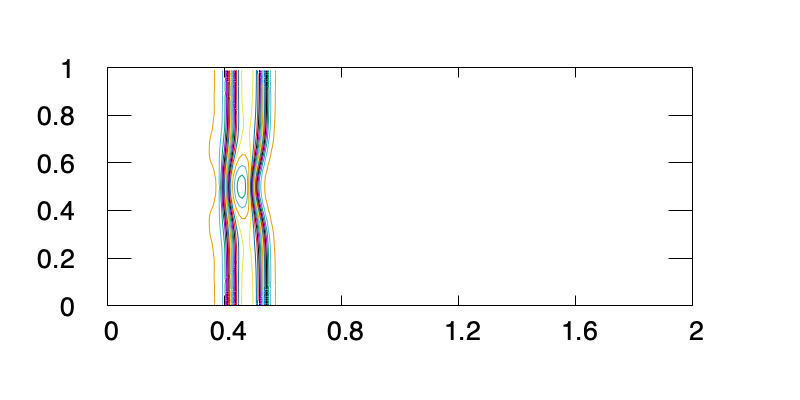}}\quad
                \subfigure[surface level at $t=0.12$]		
		     {\includegraphics[width=7cm]{eg_8_21_tend012.png}}
				}
        \mbox{ \subfigure[surface level at $t=0.24$]					
        {\includegraphics[width=7cm]{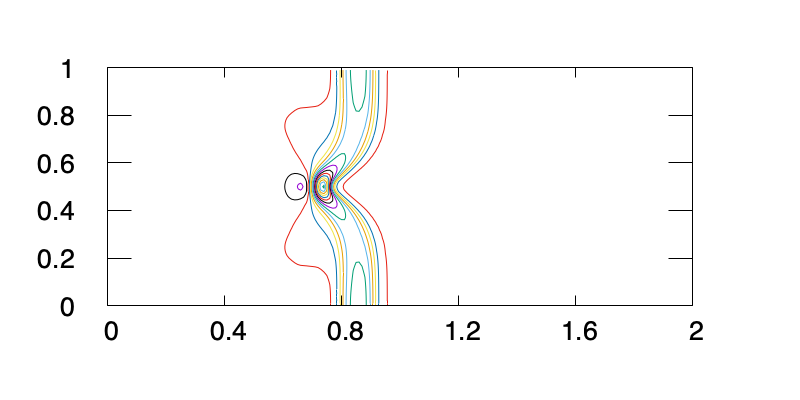}}\quad		
           \subfigure[surface level at $t=0.24$]		
    {\includegraphics[width=7cm]{eg_8_21_tend024.png}}
				}
        \mbox{ \subfigure[surface level at $t=0.36$]					
        {\includegraphics[width=7cm]{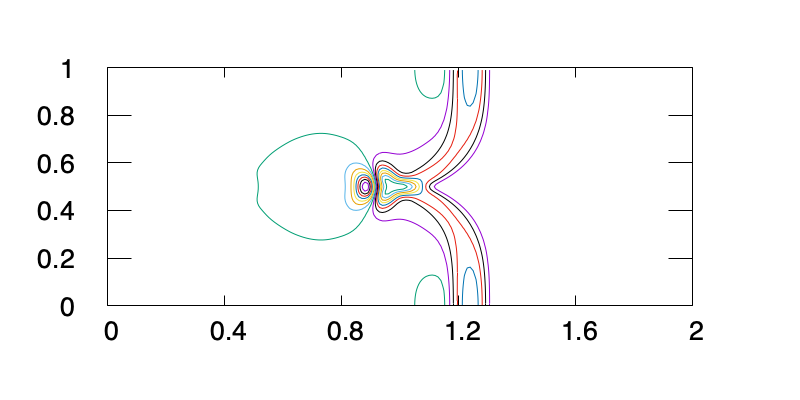}}\quad		
           \subfigure[surface level at $t=0.36$]		
    {\includegraphics[width=7cm]{eg_8_21_tend036.png}}
				}
        \mbox{\subfigure[surface level at $t=0.48$]					
        {\includegraphics[width=7cm]{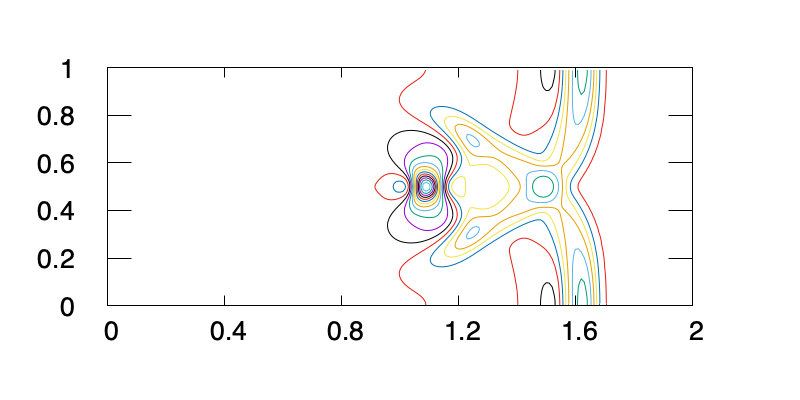}}\quad		
           \subfigure[surface level at $t=0.48$]		
    {\includegraphics[width=7cm]{eg_8_21_tend048.png}}
				}
        \mbox{\subfigure[surface level at $t=0.60$]					
        {\includegraphics[width=7cm]{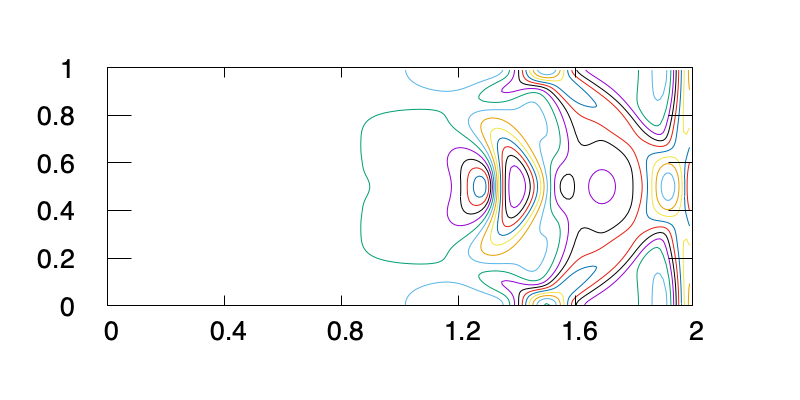}}\quad		
           \subfigure[surface level at $t=0.60$]		
    {\includegraphics[width=7cm]{eg_8_21_tend06.png}}
				}
				\caption{ Numerical solutions of the surface level $h+b$ for Example~\ref{exam2D_2} with $200\times100$ uniform meshes.
From top to bottom: at $t=0.12$ from 0.9998 to 1.0060; at  $t=0.24$ from 0.9967 to 1.0130;
at  $t=0.36$ from 0.9901 to 1.0097; $t=0.48$ from 0.9906 to 1.0043; $t=0.6$ from 0.9955 to 1.0045. 30 contour lines are used. Left: SI-S1 scheme;  Right: SI-S2 scheme. }
				\label{Fig_ex2D_2_1}
			\end{center}
		\end{figure}

}
\end{exa}

\section{Conclusion}
\label{sec6}
\setcounter{equation}{0}
\setcounter{figure}{0}
In this paper, we propose two types of  high-order semi-implicit asymptotic preserving  finite difference WENO schemes for the shallow water equations with  bottom topology and Manning friction term \eqref{SWe_MB4}. 
The  objective is to develop  a highly  efficient method that avoids the use of weighted diffusive terms, as proposed in \cite{huang2023high}.
Theoretical analysis and numerical tests demonstrated  that the  SI-S1 scheme has the AP and AA, and  well-balanced properties.
Unfortunately, the SI-S2 scheme  fails to converge to the limiting equations~\eqref{lim_E4} as $\eps$ approaches zero, indicating that the implicit treatment of the Manning friction is  necessary for  the stability of the scheme.
Furthermore, comparing to the first-order asymptotic preserving scheme proposed in \cite{bulteau2020fully}, our schemes  not only avoid  the severe time step restrictions as $\eps\to 0$ but also offer higher resolution,  as demonstrated by the numerical experiments. 
Furthermore,  the schemes developed in this work are more efficient than those proposed  in our previous work~\cite{huang2023high}, essentially in the intermediate state.

\section{Acknowledgment} The first and third authors are partially supported by National Key R \& D Program of China No.
2022YFA1004500, NSFC grant No. 92270112, NSF of Fujian Province No. 2023J02003. Sebastiano Boscarino is
supported for this work by (1) the Spoke 1 “FutureHPC \& BigData” of the Italian Research Center on High-
Performance Computing, Big Data and Quantum Computing (ICSC) funded by MUR Missione 4 Componente
2 Investimento1.4:Potenziamento strutture di ricerca e creazione di “campioni nazionali di R\&S (M4C2-19
)”; by (2) the Italian Ministry of Instruction, University and Research (MIUR) to support this research with
funds coming from PRIN Project 2022 (2022KA3JBA),entitled “Advanced numerical methods for time dependent 
parametric partial differential equations and applications”; (3) from Italian Ministerial grant PRIN2022 PNRR
“FIN4GEO: Forward and Inverse Numerical Modeling of hydrothermal systems in volcanic regions with application to geothermal energy exploitation.”,(No.P2022BNB97).S.Boscarino is a member of the INdAM Research group GNCS.

\bibliographystyle{abbrv}
\bibliography{refer}
\end{document}